\numberwithin{equation}{section} \theoremstyle{plain}
\newtheorem{theorem}{Theorem}[section]
\newtheorem{lemma}{Lemma}[section]
\newtheorem{proposition}{Proposition}[section]
\newtheorem{definition}{Definition}
\newtheorem{remark}{Remark}[section]
\numberwithin{equation}{section}
\def\tr{\text{tr}}
\def\la{\left\langle}
\def\ra{\right\rangle}
\def\e{\varepsilon}
\def\R{\mathbb R}
\def\E{\mathbb E}
\begin{document}
\title{Stochastic maximum principle for systems driven by local martingales with spatial parameters}

\runtitle{Stochastic maximum principle}

  \begin{aug}
    \author{\fnms{~ Jian} \snm{Song~}\ead[label=e2]{txjsong@sdu.edu.cn}}
    \and
    \author{\fnms{~ Meng} \snm{Wang}\ead[label=e3]{wangmeng22@mail.sdu.edu.cn}}
    
    \affiliation{Shandong University}
    \runauthor{ J. Song \and  M. Wang}

%
    \address{ Research Center for Mathematics and Interdisciplinary Sciences, Shandong University, Qingdao, Shandong, 266237, China;
School of Mathematics, Shandong University, Jinan, Shandong, 250100, China\\
      \printead{e2}
   }

    \address{School of Mathematics, Shandong University, Jinan, Shandong, 250100, China\\
      \printead{e3}
    }
  \end{aug}

\begin{keyword}[class=AMS]
    \kwd[Primary ]{93E20, 60H10}
  \end{keyword}

  \begin{keyword}
  \kwd{stochastic optimal control}
    \kwd{stochastic maximum principle}
\kwd{local martingale with a spatial parameter}
  \end{keyword}

\begin{abstract}
	We consider the stochastic optimal control problem for the dynamical system of the stochastic differential equation driven by a local martingale with a spatial parameter. Assuming the convexity of the  control domain, we obtain the stochastic maximum principle as the necessary condition for an optimal control,  and we also prove its sufficiency under proper conditions. The stochastic linear quadratic  problem in this setting is also discussed.
\end{abstract}

\maketitle
{
	
	\tableofcontents 
}

\section{Introduction}



This  paper concerns the stochastic maximum principle for the dynamical system of the stochastic differential equation (SDE) driven by a local martingale with a spatial parameter. 

On a filtered probability space $(\Omega, \mathcal F, \{\mathcal F_t\}_{t\ge 0}, \mathbf P)$ 	 that satisfies the usual conditions,  
we consider the following  stochastic controlled system 
\begin{equation}\label{state equation}
\left\{ 
\begin{array}{rl}
dx^{u}(t)= & b\big( t,{x^{u}(t),u(t)}\big) dt+M\big(dt,
x^{u}(t),u(t)\big),  \\ 
x^u\left( 0\right) = & x^u_{0},
\end{array}
\right. 
\end{equation}
where $b: [0,T]\times \R^d\times U \times\Omega\to \R^d$ is an $\{\mathcal F_t\}_{t\ge 0}$-adapted process, and \[\Big\{M(t,x,u), t\in[0,T]\Big\}_{(x,u)\in\R^d\times U}\] is a family of $d$-dimensional  local martingales with the parameter $(x,u)\in\R^d\times U\subset \R^{d}\times \R^k$. We assume that the control domain $U$ is a convex subset of $\R^k$. Let
\begin{equation}\label{e:U}
\mathbf U[0,T]=\left\{u:[0,T]\times \Omega \rightarrow U: u\text{ is } \{\mathcal{F}_{t}\}_{t\ge 0}\text{-adapted and } \E\int_{0}^{T}\left\vert u(t)\right\vert ^{2}dt<\infty \right\}       
\end{equation}
denote the set of all admissible controls. 
The cost functional $J(u)$ is given by
\begin{equation}\label{scp}
J\left( u\right) =E\left[\int_0^Tf\big(t,x^u(t),u(t)\big)dt+\Phi \big( {x^{u}(T)}\big)\right],~~ u\in \mathbf U[0,T],
\end{equation}
where $f:[0,T]\times \R^{d}\times U\rightarrow \R$ and $\Phi :\R^{d}\rightarrow \R$ are measurable functions.

For an optimal control $\bar u\in \mathbf U[0,T]$, i.e., a control $\bar u$ satisfying $J(\bar u)=\inf_{u\in \mathbf U[0,T]} J(u)$, let $\bar x=x^{\bar u}$, and we call $(\bar x, \bar u)$  an optimal pair. The goal of this paper is to find the necessary condition which is the so-called stochastic maximum principle for an optimal pair $(\bar x, \bar u)$ for the optimal control problem \eqref{state equation}--\eqref{scp},  and moreover, we shall  prove the sufficiency of the stochastic maximum principle under proper conditions.

SDEs driven by Brownian motion have been extensively studied, in particular, by the celebrated It\^o calculus. The diffusion processes described by SDEs play an important role in the study of stochastic dynamical systems. 	To study various problems concerning SDEs driven by random vector fields (infinite-dimensional random processes),  Kunita \cite{Kunita90}  developed stochastic calculus for semimartingales  with  spatial parameters and 
studied SDEs of the following form{\color{blue}:}
\begin{equation}\label{sde1}
X_t=x_0+\int_0^t F(ds,X_s),
\end{equation}
where $\left\{F(t,x), t\in[0,T] \right\}_{x\in\R^d}$ is a family of continuous semimartingales with the spatial parameter $x\in \R^d$. Note that  \eqref{state equation} is a specific form of \eqref{sde1}.

On the one hand, It\^o's SDE is a special case of \eqref{sde1} if we set
\[F(t,x)=\int_0^tf_0(r,x)dr+\sum\limits_{k=1}^m\int_0^tf_k(r,x)dB_r^k,\]
where $(B^1,\dots, B^m)$ is an $m$-dimensional Brownian motion. On the other hand, if $F(t,x)$ is a C-Brownian motion, i.e., for any partition $0\leq t_0<t_1\cdots<t_n\leq T$ of $[0,T]$,  the increments $F(t_{i+1},x)-F(t_i,x), i=0,1,\cdots,n$ are  independent, 
Kunita \cite{Kunita86} proved that there exist a sequence of independent Brownian motions $\{B^k\}_{k\in\mathbb N}$ and functions $\{f_k\}_{k\in \mathbb N}$ such that
\[F(t,x)=\int_0^tf_0(r,x)dr+\sum\limits_{k=1}^\infty\int_0^tf_k(r,x)dB_r^k.\]
Thus,  Equation \eqref{sde1} can be   viewed formally as an SDE  driven by 	 infinite-dimensional Brownian motion.

Stochastic optimal control problems of dynamical systems driven by  finite-dimensional Brownian motion have been  studied in depth. Here, we briefly mention some literature on  stochastic maximum principles, which is by no means complete.  Bismut \cite{Bismut78} obtained the local maximum principle for stochastic optimal control problems with a convex control set. Peng \cite{Global} obtained the  maximum principle for the general case in which the diffusion coefficient may contain the control variable and the control domain need not be convex. {More recently,  stochastic maximum principles for mean-field control problems were studied in, for instance,  Li \cite{LiJuan2013}, Buckdahn, Li, and Ma \cite{li2016}, Meyer-Brandis, {\O}ksendal, and Zhou \cite{16}, and for stochastic recursive optimal control problems by employing backward stochastic differential equations (BSDEs) in Chen and Epstein \cite{ChenEpstein2002}, Ji and Zhou \cite{JiZhou2006}, Hu \cite{Hu2017}, etc. For stochastic maximum principles in other various situations, we also refer to, for instance,  Ma and Yong \cite{MaJ}, Hu, Ji, and  Xue \cite{Hu2018}, Tang \cite{Tang}, Zhou \cite{ZhouXY}, Wu \cite{Wuzhen2013}, Han, Peng, and Wu \cite{HanPengWu}, Yong and Zhou \cite{YZ99}, and the references therein.

The present paper concerns the optimal control problem \eqref{state equation}--\eqref{scp} driven by a local martingale with a spatial parameter.   One obvious motivation is that, viewing \eqref{state equation} as an SDE driven by infinite-dimensional Brownian motion, it arises naturally when studying financial markets comprising numerous  stocks.  Indeed, optimal control problems for systems governed by infinite-dimensional stochastic evolution equations have been investigated in, for instance, \cite{HuYPeng1990, dumeng2013, fuhrman2013,lu2014general,fabbri2017stochastic}.   Another motivation comes from the study of an illiquid financial market in which the trades of a single large investor can influence market prices. For such a market, Peter and Dietmer \cite{Peter} employed a family of continuous semimartingales $\{P(t, v), t\in[0,T]\}_{v\in\R}$ to model  the price fluctuations of the risky asset given that  the large investor holds a constant stake of $v$ shares in this asset. 

We would also like to point out that the existence and uniqueness of the solution to  \eqref{sde1} were obtained under suitable Lipschitz conditions in Kunita \cite{Kunita90}, and this result was extended in Liang \cite{Liang2007} to  the non-Lipschitz case. Backward doubly SDEs involving martingales with spatial parameters  were studied in  Bally and Matoussi \cite{Matoussi}  and Song, Song, and Zhang \cite{SSZ19}, and 
the solutions were proved therein to be  probabilistic interpretations (nonlinear Feynman-Kac formulas) for the corresponding stochastic partial differential equations.  

We would like to make a few remarks on our work before ending this introduction. In our optimal control problem \eqref{state equation}--\eqref{scp}, we assume that the control domain $U\subset \R^k$ is a convex set, and this enables us to apply the standard variational method to derive the stochastic maximum principle.   A key step of the variational method is to derive the variational equation (see eq. \eqref{variational equation} in Section \ref{sec:vq}) for the generalized SDE (\ref{state equation}), which  involves calculating the derivatives of the local martingale $M$ with respect to the spatial parameters $x$ and $u$. This is the major difference between our problem and the classical case. Further, to obtain the variational equation, we shall employ the stochastic calculus for semimartingales with parameters developed in \cite{Kunita90}. 
 Furthermore, the corresponding adjoint equation (see BSDE \eqref{adjoint} in Section \ref{sec:mp}) contains an extra  martingale which is orthogonal to $M$  to guarantee the existence and uniqueness of the solution. This is because  the BSDE is driven by a general martingale rather than Brownian motion (see El Karoui and Huang \cite{El94}). Despite all these differences,   we can show that the classical stochastic maximum principle is indeed a special case in our setting.

The rest of this paper is organized as follows.  In Section \ref{sec:pre},  we provide some preliminaries on the stochastic calculus for martingales with spatial parameters.   In Section \ref{sec:smp},  we formulate our optimal control problem, derive the stochastic  maximum principle,  and  prove its sufficiency under proper conditions.    Finally in Section \ref{sec:LQ},  we discuss the linear quadratic optimal control problems (LQ problems) in our setting.

Throughout the article, we use $C$ to denote a generic constant which may vary in different places. 

\section{Preliminaries}\label{sec:pre}

In this section, we collect some preliminaries on regularity results and stochastic calculus for local martingales with t spatial parameters. We refer to \cite{Kunita90} for more details.

We recall some conventional notations. Denote by $\R^d$ the  $d$-dimensional real Euclidean space. We use the notation $\partial_x=\left(\frac{\partial}{\partial x_1},\cdots,\frac{\partial}{\partial x_d}\right)$ for $x\in \R^d$. Then for $\Psi:\R^d\rightarrow \R$, $\partial_x \Psi=\left( \frac\partial{\partial {x_j}} \Psi\right)_{1\times d}$ is a row vector,  and for $\Psi:\R^d\rightarrow \R^n$,  $\partial_x \Psi=\left( \frac\partial{\partial {x_j}} \Psi_i\right)_{n\times d}$ is an $n\times d$ matrix.  For two vectors $u, v\in \R^d$,  $\la u,v\ra$ denotes the scalar product of $u$ and $v$, and $\left\vert v \right\vert=\sqrt{\la v, v\ra}$ means the Euclidean norm of $v$. We also use $\la \cdot , \cdot \ra$ to denote the quadratic covariation of two continuous local martingales.  For  $A,B\in\R^{d\times n }$, we denote the scalar product of $M$ and $N$ by $\la M,N\ra=\tr\big[MN^*\big]$(resp., $\left\Vert M\right\Vert=\sqrt{\tr [MM^*]}$), where the superscript $^*$ stands for  the transpose of vectors or matrices.

 \subsection{Regularity of $M(t,x)$ with respect to the spatial parameter $x$ }

In this subsection, we shall recall some results on the differentiability of
continuous local martingales  with respect to the  spatial parameter $x$.  

Let $M:=\{M(t,x), t\in[0,T]\}_{x\in \R^d}$ be a family of local martingales with joint quadratic variation (quadratic covariation)  on the interval $[0,t]$ given by a.s.
\begin{equation}
\la  M(\cdot, x), M(\cdot, y) \ra_t=\int_{0}^{t}q(s,x,y)ds,
\end{equation}\label{e:qv}
where $q(t,x,y)$ is a predictable  process and is called the local characteristic of $M$.

Let $\alpha=(\alpha_1,\ldots,\alpha_d)$ be a  multi-index, and $|\alpha|  =\alpha_1+\cdots+\alpha_d$. Let $d$ and $l$ be  positive integers and $m$ be a nonnegative integer. Denote by $C^m(\mathbb R^{d};\mathbb{R}^{l})$ or simply $C^m$ the set of  $m$-times continuously differentiable functions $f:\mathbb{R}^{d}\rightarrow \R^{l}$.  We use the convention that if $m=0$, $C^0(\R^d;\R^l)$ is just the set $C(\R^d;\R^l)$ of continuous functions. 

Let $K$ be a subset of $\R^{d}$. Denote
\begin{equation*}
\left\Vert f\right\Vert _{m,K}=\sup_{x\in K}\frac{\left\vert
	f(x)\right\vert }{1+\left\vert x\right\vert   }+\sum\limits_{1\leq \left\vert \alpha \right\vert \leq m}\sup_{x\in K}\left\vert D^{\alpha} f(x)\right\vert,
\end{equation*}
where  $D^{\alpha } :=\frac{\partial ^{\left\vert \alpha \right\vert }}{
	(\partial x_{1})^{\alpha _{1}}\ldots (\partial x_{d})^{\alpha _{d}}}$ is the differential operator.
Then $C^{m}$ is a Fr\'echet space  endowed with seminorms $\{\|\cdot\|_{m,K}: K \subset \R^{d} \text{ is compact}\}.$ When $K=\R^{d}$, we also write $\|\cdot\|_{m}:=\|\cdot\|_{m,\R^{d}}$. Here $C_b^{m}$ denotes the set $\{f\in C^{m}: \|f\|_{m}<\infty\}.$

For a constant $\delta\in (0,1] $, let $C^{m,\delta}$ denote the set of functions $f\in C^m$  such that the partial derivatives  $D^\alpha f$ with $\left\vert\alpha\right\vert=m$ are $\delta$-H\"older continuous. Similarly, $C^{m,\delta}$ is a Fr\'echet space under the seminorms, 
\begin{equation*}
\left\Vert f\right\Vert _{m+\delta, K}:=\left\Vert f\right\Vert
_{m,K}+\sum\limits_{\left\vert \alpha \right\vert =m}\sup_{\substack{ x,y\in
		K \\ x\neq y}}\frac{\left\vert D^{\alpha }f(x)-D^{\alpha }f(y)\right\vert }{\left\vert
	x-y\right\vert ^{\delta}},
\end{equation*}
where  $K$ are compact subsets of $\R^{d}$. Clearly $C^{m,0}=C^{m}$.  We also write $\|\cdot\|_{m+\delta, \R^{d}}:=\|\cdot\|_{m+\delta}$, and denote by $C_b^{m,\delta}$ the set $\{f\in C^{m,\delta}: \|f\|_{m+\delta}<\infty\}.$

We say that  a continuous function $f(t,x), (t,x)\in [0,T]\times \R^{d}$ belongs to the class $C^{m,\delta}$ (or $f(t,\cdot)$ is a $C^{m,\delta}$-valued function) if for each fixed $t\in[0,T]$, $f(t, \cdot)$ belongs to $C^{m,\delta}$ and $\int_0^T \|f(t,\cdot)\|_{m+\delta, K}dt<\infty$ for any compact subset $K\subset \R^{d}.$

Similarly, the function space ${\widetilde{C}}^{m}$ consists of all $\R^{l}$-valued functions $g(x,y)$ that are $m$-times differentiable with respect to each $x,y\in \R^{d}$. For $K\subset \R^{d}$, we define
\begin{equation*}
\left\Vert g\right\Vert _{m,K}^{\thicksim }:=\sup\limits_{ x,y\in K}  
\frac{\left\vert g(x,y)\right\vert }{(1+\left\vert
	x\right\vert )(1+\left\vert y\right\vert)}+\sum\limits_{1\leq \left\vert \alpha \right\vert
	\leq m}\sup 
\limits_{ x,y\in K }\left\vert D_{x}^{\alpha
}D_{y}^{\alpha }g(x,y)\right\vert.
\end{equation*}	
Then ${\widetilde{C}}^{m}$ is a Fr\' echet space equipped with the seminorms $\displaystyle\{ \|\cdot\|^{\thicksim }_{m,K}, K\subset \R^{d} \text{ is compact}\}$.  	

For $\delta\in(0,1]$, we define
\[\left\Vert g\right\Vert_{m+\delta,K}^{\thicksim }=\left\Vert g\right\Vert_{m,K}^{\thicksim }+\sum\limits_{\left\vert \alpha \right\vert
	=m}\left\Vert D_{x}^{\alpha
}D_{y}^{\alpha }g\right\Vert _{\delta,K}^{\thicksim },\]
where 
\begin{equation*}
\left\Vert g\right\Vert _{\delta,K }^{\thicksim }=\sup_{\substack{ x,y,x',y'\in K  \\
		x\neq x', y\neq y'}}
\frac{\left\vert g(x,y)-g(x',y)-g(x,y')+g(x',y')\right\vert }{\left\vert x-x'\right\vert ^{\delta	}\left\vert y-y'\right\vert ^{\delta }}.
\end{equation*}
Let  ${\widetilde{C}}^{m,\delta}$ denote the space of functions $g$ such that   $\left\Vert g\right\Vert_{m+\delta,K}^{\thicksim }<\infty$ for any compact subset $K$, and thus   ${\widetilde{C}}^{m,\delta}$ is a Fr\'echet space with the seminorms $\{ \|\cdot\|^{\thicksim}_{m+\delta,K}, K\subset \R^{d} \text{ is compact}\}$. We also have $\widetilde C^{m,0}=\widetilde C^{m}$.

When $K=\R^{d}$, we write $\|\cdot\|^{\thicksim}_{m}:=\|\cdot\|^{\thicksim}_{m,\R^{d}} $ and $\|\cdot\|^{\thicksim}_{m+\delta}:=\|\cdot\|^{\thicksim}_{m+\delta,\R^{d}}$. We also define $\widetilde{C}_b^{m}:=\{g\in \widetilde{C}^m: \|g\|^{\thicksim}_{m}<\infty\}$ and $\widetilde{C}_b^{m,\delta}:=\{g\in \widetilde{C}^m: \|g\|^{\thicksim}_{m+\delta}<\infty\}. $

Consider a random field $\{F(\omega, t,x), t\in[0,T], x\in\R^{d}\}$. If $F(\omega, t,x)$ is $m$-times continuously differentiable with respect to $x$ for almost all $\omega\in\Omega$ and for all $t\in[0,T]$, then it is called a \emph{$C^m$-valued  process}. Furthermore, if  $t\mapsto F(\omega,t, \cdot)$ is a continuous mapping from $[0,T]$ to $C^m$ for almost all $\omega$, then we call it a \emph{continuous $C^m$-process}.  In the same way, one can define \emph{$C^{m,\delta}$-valued process, continuous $C^{m,\delta}$-process, $\widetilde C^{m}$-valued process,  continuous $\widetilde C^{m}$-process, $\widetilde C^{m,\delta}$-valued process,} and  \emph{continuous $\widetilde C^{m,\delta}$-process}.

The following two theorems (Theorem \ref{differentiability} and Theorem \ref{interchange}), which are adopted from  \cite{Kunita90} (Theorem 3.1.2 and Theorem 3.1.3, respectively),  describe the relationship of the spatial regularity between  local martingales and their joint quadratic variations. 

\begin{theorem}\label{differentiability} 
	Let $\{M(t,x), t\in[0,T]\}_{x\in\R^d}$ be a family of continuous local martingales with $M(0,x)\equiv 0$. Assume that the joint quadratic variation $Q(t,x,y)$ has a modification  of a continuous $\widetilde{C}^{m,\delta}$-process for some $m\in\mathbb N$ and $\delta\in(0,1]$. Then $M(t,x) $  has a modification of continuous $C^{m,\varepsilon}$-process for any $\varepsilon <\delta$. Furthermore{\color{blue},} for  any $|\alpha|\le m$,  $\{D^\alpha_x M(t, x),t\in[0,T]\}_{x\in\R^d}$ is a family of continuous local martingales with the joint quadratic variation $D_x^\alpha D_y^
	\alpha Q(t,x,y).$
\end{theorem}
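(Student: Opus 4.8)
The plan is to combine the Burkholder--Davis--Gundy (BDG) inequality, which converts moment bounds on a continuous local martingale into moment bounds on its quadratic variation, with a Kolmogorov-type continuity/differentiability criterion for random fields, and to run an induction on the order $m$ of differentiability. Throughout, the whole problem is reduced to obtaining moment estimates for spatial increments of $M$ and of its formal derivatives, after which Kolmogorov's theorem supplies the desired modifications.

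For the base case $m=0$, the basic observation is that for fixed $\xi,\eta\in\R^d$ the process $N(t):=M(t,\xi)-M(t,\eta)$ is a continuous local martingale with $N(0)=0$ whose quadratic variation is exactly the symmetric second difference of $Q$,
\[
\langle N\rangle_t=Q(t,\xi,\xi)-Q(t,\xi,\eta)-Q(t,\eta,\xi)+Q(t,\eta,\eta).
\]
The right-hand side is precisely the quantity controlled by the mixed H\"older seminorm $\|\cdot\|^{\thicksim}_{\delta}$ appearing in the definition of $\widetilde C^{m,\delta}$; for $m=0$ it is bounded by $\|Q(t,\cdot,\cdot)\|^{\thicksim}_{\delta}\,|\xi-\eta|^{2\delta}$. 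Applying BDG with exponent $p$ then yields $\E\big[\sup_{t\le T}|M(t,\xi)-M(t,\eta)|^p\big]\le C\,|\xi-\eta|^{p\delta}$, and since $p$ may be taken arbitrarily large the exponent $p\delta$ exceeds $d$ with H\"older gain $\delta-d/p$. Kolmogorov's theorem therefore produces a modification that is jointly continuous in $(t,x)$ and $\varepsilon$-H\"older in $x$ for every $\varepsilon<\delta$, settling $m=0$.

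For the differentiability and the induction step, I would pass to difference quotients. Setting $\Delta_h^i M(t,x)=h^{-1}\big(M(t,x+he_i)-M(t,x)\big)$, one checks that $\{\Delta_h^i M(t,x)\}_x$ is again a family of continuous local martingales whose joint quadratic variation is the double difference quotient $\Delta_{h,x}^i\Delta_{h,y}^i Q(t,x,y)$, which converges to $\partial_{x_i}\partial_{y_i}Q(t,x,y)\in\widetilde C^{m-1,\delta}$ as $h\to0$. Combining the BDG estimates above (now applied to differences of difference quotients) with the regularity of the mixed derivatives of $Q$, I would show that the $\Delta_h^i M$ form a Cauchy family in the relevant sup-over-compacts $L^p$-norm, uniformly in $t$, so that they converge to a continuous process $\partial_{x_i}M(t,x)$ which is the genuine spatial derivative of the chosen modification of $M$. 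Since uniform-in-time $L^p$ convergence of continuous local martingales preserves the local martingale property and allows one to pass to the limit in the quadratic (co)variation, the limit $\partial_{x_i}M$ is a local martingale with joint quadratic variation $\partial_{x_i}\partial_{y_i}Q$. Iterating, and invoking the inductive hypothesis on $\partial_{x_i}\partial_{y_i}Q\in\widetilde C^{m-1,\delta}$, gives the full statement for all $|\alpha|\le m$ together with the H\"older regularity $C^{m,\varepsilon}$.

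The main technical obstacle, requiring the most care, is that $M$ consists of \emph{local} martingales and the integrability of $Q$ is only assumed pathwise, so the BDG moment bounds are not directly available with finite expectations. I would resolve this by localization: introduce stopping times $\tau_n=\inf\{t:\int_0^t\|Q(s,\cdot,\cdot)\|^{\thicksim}_{m+\delta,K}\,ds\ge n\}\wedge T$ exhausting $[0,T]$, run the BDG--Kolmogorov argument for the stopped fields $M(\cdot\wedge\tau_n,x)$ where all quadratic variations are bounded, obtain continuous (resp.\ $C^{m,\varepsilon}$) modifications on each $[0,\tau_n]$, and patch them together by consistency on overlaps to get a modification on all of $[0,T]$. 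A secondary point is the bookkeeping in the Kolmogorov criterion needed to upgrade increment bounds on difference quotients into genuine $m$-fold differentiability with H\"older control on the top derivatives; this is exactly where the precise structure of the $\widetilde C^{m,\delta}$ seminorms, and in particular the mixed second-difference seminorm $\|\cdot\|^{\thicksim}_{\delta}$, is used.
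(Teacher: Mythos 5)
The paper does not prove this statement at all: it is quoted from Kunita (Theorem 3.1.2 of \cite{Kunita90}), so there is no internal proof to compare against. Your proposal is, in outline, a reconstruction of Kunita's own argument --- localization, Burkholder--Davis--Gundy moment bounds on spatial increments via the second difference of $Q$, a Kolmogorov-type criterion giving a modification that is $\varepsilon$-H\"older for every $\varepsilon<\delta$, and convergence of difference quotients (whose joint quadratic variations are double difference quotients of $Q$) to identify the derivatives and their covariations, with induction on $m$ using $D_{x_i}D_{y_i}Q\in\widetilde{C}^{m-1,\delta}$. So the route is the standard one and the skeleton is sound, including the closedness of continuous local martingales under uniform-in-time limits after localization.

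There is one concrete slip, and it sits exactly in the step you single out as the main obstacle. Your stopping times $\tau_n=\inf\{t:\int_0^t\|Q(s,\cdot,\cdot)\|^{\thicksim}_{m+\delta,K}\,ds\ge n\}\wedge T$ do not make the BDG moments finite: for $N=M(\cdot,\xi)-M(\cdot,\eta)$ the quantity $\langle N\rangle_{\tau_n}$ is the second difference of $Q$ \emph{evaluated at time} $\tau_n$, hence bounded by $\|Q(\tau_n)\|^{\thicksim}_{\delta,K}|\xi-\eta|^{2\delta}$, and a bound on the time integral of the seminorm gives no pointwise bound on $\|Q(\tau_n)\|^{\thicksim}_{\delta,K}$. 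Under the hypothesis as stated ($Q$ itself a continuous $\widetilde{C}^{m,\delta}$-process), the correct localization is $\tau_n=\inf\{t:\|Q(t)\|^{\thicksim}_{m+\delta,K}\ge n\}\wedge T$, which exhausts $[0,T]$ because $t\mapsto\|Q(t)\|^{\thicksim}_{m+\delta,K}$ is a.s.\ finite and continuous; your integral-based time is the right one only in the form in which the paper actually applies the theorem, namely $Q(t,x,y)=\int_0^t q(s,x,y)\,ds$ with local characteristic $q$ in the class $B^{m,\delta}$, and then with $q$, not $Q$, inside the integral. With that one-line repair the rest of your outline (Cauchy estimates for differences of difference quotients, passage to the limit in the quadratic covariations) goes through as in Kunita's proof.
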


\begin{theorem}\label{interchange}
	Let $\{M(t,x), t\in[0,T]\}_{x\in\R^d} $ and $\{N(t,y), t\in[0,T]\}_{y\in\R^d}$ be continuous local martingales with values in $C^{m,\delta}$ for some $m\geq 0$ and $\delta\in(0,1]$. Then the joint quadratic variation has a modification of a continuous $\widetilde{C}^{m,\varepsilon}$-process for any $\varepsilon<\delta$. Furthermore, the modification satisfies, for  $|\alpha|,|\beta|\leq m$,
	\begin{equation}\label{exchange}
	D^{\alpha}_xD^{\beta}_y\langle M(\cdot,x),N(\cdot,y)\rangle_t=\langle D^{\alpha}_xM(\cdot,x), D^{\beta}_yN(\cdot,y)\rangle_t
	\end{equation}
	for all $t\in[0,T]$.
\end{theorem}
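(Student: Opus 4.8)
The plan is to reduce the whole statement to the bilinearity of the quadratic covariation and then to control spatial difference quotients by moment estimates. The cornerstone is the elementary identity, valid for fixed spatial arguments,
\begin{multline*}
\langle M(\cdot,x)-M(\cdot,x'),\, N(\cdot,y)-N(\cdot,y')\rangle_t
=\langle M(\cdot,x),N(\cdot,y)\rangle_t-\langle M(\cdot,x),N(\cdot,y')\rangle_t\\
-\langle M(\cdot,x'),N(\cdot,y)\rangle_t+\langle M(\cdot,x'),N(\cdot,y')\rangle_t,
\end{multline*}
i.e.\ the rectangular second spatial difference of the covariation process equals the covariation of the spatial differences. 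Dividing by the increments and letting them tend to zero should identify $D^\alpha_xD^\beta_y\langle M(\cdot,x),N(\cdot,y)\rangle_t$ with $\langle D^\alpha_xM(\cdot,x),D^\beta_yN(\cdot,y)\rangle_t$. I would first treat $|\alpha|,|\beta|\le 1$ and then propagate to $|\alpha|,|\beta|\le m$ by induction, at each step using that the differentiated fields $D^\alpha_xM$ and $D^\beta_yN$ are again continuous local martingales (being ucp limits of difference quotients of $M$, resp.\ $N$, a fact in the spirit of Theorem~\ref{differentiability}) of one lower order of differentiability, to which the first-order argument reapplies.

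For the first-order step in the $x_j$-direction, bilinearity gives
\begin{equation*}
\frac{\langle M(\cdot,x+he_j),N(\cdot,y)\rangle_t-\langle M(\cdot,x),N(\cdot,y)\rangle_t}{h}
=\langle \Delta^h_j M(\cdot,x),N(\cdot,y)\rangle_t,
\end{equation*}
where $\Delta^h_jM(\cdot,x)=h^{-1}\big(M(\cdot,x+he_j)-M(\cdot,x)\big)$ is again a continuous local martingale. Since $M$ is a continuous $C^{m,\delta}$-process with $m\ge1$, the mean value theorem yields $\sup_{s\le t}\big|\Delta^h_jM(s,x)-\partial_{x_j}M(s,x)\big|\to 0$ as $h\to0$ on compacts. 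I would upgrade this to convergence of the covariations through the Kunita--Watanabe inequality
\begin{equation*}
\big|\langle L^h(\cdot),N(\cdot,y)\rangle_t\big|\le \langle L^h\rangle_t^{1/2}\,\langle N(\cdot,y)\rangle_t^{1/2},\qquad L^h:=\Delta^h_jM(\cdot,x)-\partial_{x_j}M(\cdot,x),
\end{equation*}
combined with the Burkholder--Davis--Gundy bound $\E\big[\langle L^h\rangle_t^{p/2}\big]\le C_p\,\E\big[\sup_{s\le t}|L^h(s)|^p\big]$, which forces the right-hand side to vanish. As $M$ and $N$ are only local martingales, I would localize by stopping times $\tau_N=\inf\{t:\|M(t,\cdot)\|_{m+\delta,K}+\|N(t,\cdot)\|_{m+\delta,K}\ge N\}$ to obtain genuine $L^p$ bounds on a compact $K$, prove the identity on $[0,\tau_N]$, and then let $N\to\infty$.

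Once the interchange formula holds, the $\widetilde C^{m,\varepsilon}$-regularity reduces to a continuity estimate for the top-order diagonal field $g(x,y):=\langle D^\alpha_xM(\cdot,x),D^\alpha_yN(\cdot,y)\rangle_t$ with $|\alpha|=m$. By bilinearity its rectangular second difference equals $\langle D^\alpha_xM(\cdot,x)-D^\alpha_xM(\cdot,x'),\,D^\alpha_yN(\cdot,y)-D^\alpha_yN(\cdot,y')\rangle_t$; applying Kunita--Watanabe, then Cauchy--Schwarz, and finally BDG on each factor, together with the $\delta$-H\"older continuity of the $m$-th derivatives (so that $\sup_{s\le t}|D^\alpha_xM(s,x)-D^\alpha_xM(s,x')|\le |x-x'|^\delta\sup_{s\le t}\|M(s,\cdot)\|_{m+\delta,K}$), I expect a bound of the form
\begin{equation*}
\E\big[|g(x,y)-g(x',y)-g(x,y')+g(x',y')|^{p}\big]\le C\,|x-x'|^{p\delta}\,|y-y'|^{p\delta}
\end{equation*}
on each $[0,\tau_N]$, along with the analogous single-difference and pointwise bounds. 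Taking $p$ large enough that $p\delta$ exceeds the dimension, a two-parameter Kolmogorov-type continuity theorem (as in \cite{Kunita90}) then furnishes a modification lying in $\widetilde C^{m,\varepsilon}$ for every $\varepsilon<\delta$. The main obstacle I anticipate is not any single identity but the uniformity and book-keeping: all moment estimates must hold uniformly in the spatial variables on compacts, be compatible with the localization, and survive the removal of the $\tau_N$, and the exceptional null sets must be assembled into a single modification valid simultaneously for all $(x,y)$ and all $t$ — this control of the merely local character of $M$ and $N$ is where most of the effort will go.
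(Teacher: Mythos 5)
The paper contains no proof of this statement to compare against: Theorem \ref{interchange} is imported verbatim from Kunita's book (Theorem 3.1.3 of \cite{Kunita90}), as the text notes just before stating it. Your sketch is, in outline, precisely the standard argument behind that theorem of Kunita — bilinearity of the quadratic covariation to convert rectangular spatial differences into covariations of differences, Kunita--Watanabe plus Burkholder--Davis--Gundy moment estimates under localization, and a two-parameter Kolmogorov-type criterion to produce the continuous $\widetilde{C}^{m,\varepsilon}$ modification — so it is the same approach as the cited source, and it is sound. Two points deserve slightly more care than your sketch gives them: first, before writing $\langle D^{\alpha}_x M(\cdot,x), N(\cdot,y)\rangle$ you must know that $D^{\alpha}_x M(\cdot,x)$ is a continuous local martingale, which follows because it is the ucp limit of the difference-quotient local martingales and the class of continuous local martingales is closed under ucp convergence (this is not a consequence of Theorem \ref{differentiability}, whose hypotheses run in the opposite direction); second, the interchange identity obtained ``a.s. for each fixed $(x,y)$'' must be upgraded to a statement valid for the modification simultaneously at all points, e.g.\ by establishing it along rational increments and using the continuity of the covariation fields — which is exactly the null-set book-keeping you correctly flag as the main burden.
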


Fix some nonnegative integer $m$ and  $\delta\in (0,1]$, then the local characteristic $q(t,x,y)$ of $M$ is said to belong to \emph{the class $B^{m, \delta}$}, if $q(t, \cdot, \cdot)$ has a modification of a predictable $\widetilde C^{m,\delta}$-valued process with $\int_0^T \|q(t)\|_{m+\delta, K}^{\thicksim}dt<\infty$ a.s. for any compact set $K\subset \R^d$. Furthermore, if $\int_0^T \|q(t)\|_{m+\delta}^{\thicksim}dt<\infty$ a.s., we say that $q(t,x,y)$ belongs to \emph{the class $B_b^{m,\delta}$},  and if $\left\Vert q(t)\right\Vert^{\thicksim}_{m+\delta}\leq c$ holds  for all $t\in[0,T]$ and $\omega\in \Omega$, we say that $q(t,x,y)$ belongs to \emph{the class $B_{ub}^{m,\delta}$}.


\subsection{Stochastic calculus with respect to local martingales with spatial parameters}

Let $\{X_{t}$ $,0\leq t\leq T\}$ be a $\R^d$-valued predictable process such that
\begin{equation}\label{integrable}
\int_{0}^{T}q(s,X_{s},X_{s})ds<\infty \quad a.s.
\end{equation}
Then the generalized It\^o integral  $M_t(X):=\int_{0}^{t}M(ds,X_{s})$ is well defined and is a local martingale. In particular, if the sample paths of $X_t$ are  continuous a.s., the integral can be approximated by Riemann sums:
\begin{equation}
M_{t}(X)=\int_{0}^{t}M(ds,X_{s})=\lim_{\left\vert \triangle
	\right\vert \rightarrow
	0}\sum_{k=0}^{n-1}\big[M(t_{k+1},X_{t_{k}})-M(t_{k},X_{t_{k}})\big],
\end{equation}
where $\Delta$ is a partition of the  interval $[0,T]$ with $|\Delta|$ being the maximum length of all subintervals. 

Let $Y$ be another predictable process satisfying \eqref{integrable}. Then $M_t(Y)$ is also well defined, and the joint quadratic variation of $M_t(X)$ and $M_t(Y)$ is given by
\begin{equation}\label{e:-cov}
\langle  M(X),M(Y)\rangle_t
=\int_{0}^{t}q(s,X_{s},Y_{s})ds\quad a.s.
\end{equation}

\begin{remark}
	Assume $M(t,x)=g(x)W_t$, where $W_t$ is  a standard Brownian motion and $g$ is a measurable function on $\R^d$ such that $\int_0^T |g(X_s)|^2 ds<\infty$ a.s.	 The quadratic variation of $M$ is 
	\[\la M(\cdot, x), M(\cdot, y) \ra_t =g(x) g(y) t\] 
	with the local characteristic $q(t,x,y)=g(x)g(y)$.  The  stochastic integral \[M_t(X)=\int_0^t M(ds, X_s)\] now coincides with the classical It\^o integral $\int_0^t g(X_s) dW_s$. 
\end{remark}

Let $\Big\{M(t,x)=(M^1(t,x),M^2(t,x),\ldots,M^d(t,x)), t\in[0,T]\Big\}_{x\in \mathbb{R}^d}$ be  a family of $d$-dimensional continuous local martingales. Here $M^{i}(t,x), 1\le i \le q$ are  one-dimensional continuous local martingales with joint quadratic variation
\begin{equation}\label{e:jqv}
\left\langle M^i(\cdot, x), M^j(\cdot, y) \right\rangle_t=\int_0^tq_{ij}(s,x,y)ds\quad a.s.
\end{equation}
Denote $q(t,x,y)=\big(q_{ij}(t,x,y),1\leq i,j\leq d\big)$. Then $q(t,x,y)$ is a $d\times d$-matrix-valued process such that $q_{ij}(t,x,y)$=$q_{ji}(t,y,x)$ a.s. for all $x,y\in \mathbb{R}^d, t\in[0,T]$ and $1\leq i,j\leq d$. Therefore, $q(t, x,y)=q^*(t,y,x)$. Moreover, $q(t,x,x)$ is a nonnegative-definite symmetric matrix a.s. for all $(t,x)\in[0,T]\times\R^d$.

We introduce the following set of stochastic processes{\color{blue}:}
\begin{align*}
	&\mathcal{S}^2\big([0,T];\mathbb  {R}^d\big):=\left\{\phi:[0,T]\times\Omega\rightarrow\mathbb{R}^d; \phi\text{  is   predictable, }  \E\left(\sup\limits_{0\leq t\leq T}\left\vert\phi(t)\right\vert^2\right)<\infty\right\}.
\end{align*}

Consider the following SDE
\begin{equation}\label{sde}
\begin{cases}dX_t=b(t,X_t)dt+M(dt,X_t),\quad t\in(0,T], \\
X_0=x_0,
\end{cases}
\end{equation}
where $x_0\in \R^d$ and $b:[0,T]\times\mathbb{R}^d\times\Omega\rightarrow\mathbb{R}^d$ is an adapted  stochastic process.  

\begin{definition}
	We say that $X=(X_t, t\in[0,T])$ adapted to $\{\mathcal F_t\}_{t\geq 0}$ is a solution to \eqref{sde} if  $X$ satisfies the following integral equation
	\[X_t=X_0 +\int_0^t b(s,X_s) ds+\int_0^t M(ds, X_s)\]
	for  $t\in[0,T]$ almost surely. 	
\end{definition} 

By combining Theorem 3.4.1 and Lemma 3.4.3 in \cite{Kunita90}, we obtain the following result. 
\begin{theorem}\label{existence}
	Assume that there exists a positive constant $K$ such that
	\begin{eqnarray*}
		\left\vert b(t,x)-b(t,y)\right\vert&\leq& K\left\vert x-y\right\vert,\\
		\left\vert	b(t,x)\right\vert &\leq& K(1+\left\vert x\right\vert),\\
	\left\Vert q(t,x,x)-2q(t,x,y)+q(t,y,y)\right\Vert & \leq &  K\left\vert x-y\right\vert^2,\\
		\left\Vert q(t,x,y)\right\Vert&\leq& K(1+\left\vert x\right\vert)(1+\left\vert y\right\vert),
	\end{eqnarray*}
	hold for all $x,y\in \mathbb{R}^d$ a.s. Then SDE \eqref{sde} has a unique solution in $\mathcal{S  }^2\big([0,T];\mathbb{R}^d\big)$.
\end{theorem}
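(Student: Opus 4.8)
The plan is to establish existence and uniqueness of a solution to the SDE \eqref{sde} in $\mathcal{S}^2\big([0,T];\R^d\big)$ via a Picard iteration (fixed-point) argument, which is the standard route for Lipschitz-type SDEs and is what the cited results from \cite{Kunita90} should ultimately rely upon. First I would set up the iteration scheme: let $X_t^0\equiv x_0$, and having defined $X^n$, set
\begin{equation*}
X_t^{n+1}=x_0+\int_0^t b(s,X_s^n)\,ds+\int_0^t M(ds,X_s^n).
\end{equation*}
For this to make sense I must first verify that each $X^n$ is predictable and satisfies the integrability condition \eqref{integrable}, so that the generalized It\^o integral $M_\cdot(X^n)$ is a well-defined local martingale; this follows inductively from the linear growth bound $\|q(t,x,y)\|\le K(1+|x|)(1+|y|)$ applied along the diagonal together with the linear growth of $b$, giving an a priori bound on $\E\big(\sup_{t\le T}|X_t^{n}|^2\big)$ by Gr\"onwall's inequality after invoking the Burkholder--Davis--Gundy inequality for the martingale term.

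The heart of the argument is the contraction estimate. Writing $\delta X^n = X^{n+1}-X^n$, I would estimate $\E\big(\sup_{s\le t}|\delta X_s^n|^2\big)$. The drift difference is controlled by the Lipschitz assumption on $b$ in the routine way. For the martingale term, the key is the quadratic-variation identity \eqref{e:-cov}: the joint quadratic variation of $M_\cdot(X^n)-M_\cdot(X^{n-1})$ is governed by the increments of $q$, and the crucial structural hypothesis
\begin{equation*}
\big\|q(t,x,x)-2q(t,x,y)+q(t,y,y)\big\|\le K|x-y|^2
\end{equation*}
is precisely what provides the Lipschitz-type control $\big\|\langle M(X^n)-M(X^{n-1})\rangle_t\big\| \le K\int_0^t |X_s^n-X_s^{n-1}|^2\,ds$ needed to close the contraction. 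Combining this with BDG yields a bound of the form $\E\big(\sup_{s\le t}|\delta X_s^n|^2\big)\le C\int_0^t \E\big(\sup_{r\le s}|\delta X_r^{n-1}|^2\big)\,ds$, and iterating gives the usual factorial decay $C^n T^n/n!$, so $\{X^n\}$ is Cauchy in $\mathcal{S}^2$ and converges to a limit $X$.

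It then remains to pass to the limit in the iteration to show $X$ solves \eqref{sde}, and to prove uniqueness. Passing to the limit in the drift integral is straightforward by dominated convergence and the Lipschitz bound; for the stochastic integral I would again use \eqref{e:-cov} and the second-difference bound on $q$ to show $\int_0^\cdot M(ds,X_s^n)\to \int_0^\cdot M(ds,X_s)$ in $\mathcal{S}^2$, which requires the continuity/predictability of $X$ so that the generalized integral is well defined for the limit. Uniqueness follows by applying the identical contraction estimate to the difference of two putative solutions, obtaining $\E\big(\sup_{s\le t}|X_s-\tilde X_s|^2\big)\le C\int_0^t \E\big(\sup_{r\le s}|X_r-\tilde X_r|^2\big)\,ds$ and concluding via Gr\"onwall that the difference vanishes. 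I expect the main obstacle to be the careful justification of the stochastic-calculus manipulations for the martingale $M$ with a spatial parameter --- in particular, verifying that the second-order difference condition on $q$ translates correctly into the quadratic-variation estimate through \eqref{e:-cov} when the spatial arguments are themselves the (distinct) iterates, rather than a routine scalar Lipschitz calculation; this is exactly the point where one must lean on the machinery of \cite{Kunita90} rather than on classical It\^o theory.
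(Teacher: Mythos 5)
Your proposal is correct and follows essentially the same route as the paper: the paper gives no self-contained proof of Theorem \ref{existence}, obtaining it instead by citing Theorem 3.4.1 and Lemma 3.4.3 of \cite{Kunita90}, whose argument is precisely the successive-approximation (Picard) scheme you describe, with the second-difference hypothesis on $q$ converted via the bilinearity of the joint quadratic variation and \eqref{e:-cov} into the Lipschitz-type estimate $\langle M(X)-M(Y)\rangle_t\leq C\int_0^t |X_s-Y_s|^2\,ds$ that closes the contraction. Your identification of that conversion as the one genuinely non-classical step is exactly right, so nothing further is needed.
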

\begin{remark}
If we assume $q\in B^{0,1}_{ub}$, then $q$ satisfies the conditions on $q$ in Theorem \ref{existence}. 
\end{remark}

\begin{remark}\label{class1}
	Consider the following classical SDE
	\begin{equation*}
	X_t=x_0+\int_0^tb(s,X_s)ds+ \int_0^t \sigma(s,X_s)dW_s,
	\end{equation*}
	where $b(\cdot, x)$ and $\sigma(\cdot,x)$ are adapted processes for each fixed $x\in \R^d$ taking values in $\mathbb{R}^d$ and $\mathbb{R}^{d\times d}$ respectively, and $W$ is a $d$-dimensional standard Brownian motion. We can write $\int_0^tM(ds,X_s)=\int_0^t \sigma(s,X_s)dW_s$, where $M(t,x)=\int_0^t\sigma(s,x) dW_s$ with the joint quadratic variation
	\[q_{ij}(t,x,y)=\sum\limits_{k=1}^d \sigma_{ik}(t,x)\sigma_{jk}(t,y).\]
	If  we assume  $\sigma$ is uniformly  Lipschitz and linear growth as in the classical setting, then $q(t,x,y)$ satisfies the conditions in Theorem \ref{existence}.
\end{remark}

\section{ Stochastic maximum principle}\label{sec:smp}

In this section, we derive the stochastic maximum principle for the optimal control problem associated with \eqref{state equation}, \eqref{e:U} and \eqref{scp}. 

\subsection{Formulation of the stochastic optimal control problem}\label{sec:optimal control problem}
Recall the  stochastic controlled system 
\eqref{state equation}
\begin{equation*}
\left\{ 
\begin{array}{rl}
dx^{u}(t)= & b\big( t,{x^{u}(t),u(t)}\big) dt+M\big(dt,
x^{u}(t),u(t)\big),  \\ 
x^u\left( 0\right) = & x^u_{0},
\end{array}
\right. 
\end{equation*}
 the set of all admissible controls defined by \eqref{e:U}
\begin{equation*}
\mathbf U[0,T]=\left\{u:[0,T]\times \Omega \rightarrow U: u\text{ is } \{\mathcal{F}_{t}\}_{t\ge 0}\text{-adapted},\E\int_{0}^{T}\left\vert u(t)\right\vert ^{2}dt<\infty \right\} ,    
\end{equation*}
and the cost functional given by \eqref{scp} 
\begin{equation*}
J\left( u\right) =E\left\{\int_0^Tf\big(t,x(t),u(t)\big)dt+\Phi \big( {x^{u}(T)}\big)\right\}.
\end{equation*}

In \eqref{state equation}, $\left\{M(t,x,u)=(M^1(t,x,u),M^2(t,x,u),\ldots,M^d(t,x,u)), t\in[0,T]\right\}_{(x,u)\in \mathbb{R}^d\times \R^k}$ is  a family of $d$-dimensional continuous local martingales, of which the joint quadratic variation is given by 
\begin{equation}\label{e:jqv'}
\langle M^{i}(\cdot,x,u),M^{j}(\cdot,y,v)\rangle_t=\int_0^t q_{ij}(s,x,u,y,v)ds.
\end{equation}

We assume the following conditions.
\begin{itemize}
	\item [(H1)] The functions $b$, $f$, and $\Phi $ are continuous and continuously differentiable in $(x,u)$. Moreover, $b_x$ and $b_u$  are bounded, and there exists a positive constant $K_1$ such that for all $t\in[0,T]$, $(x,u)\in\R^{d+k}$,
	\[\left(|f_x|+|f_u|\right)(t,x,u)+|\Phi_x(x)|\leq K_1\left(1+|x|+|u|\right).\]

\item[(H2)]For all $ (x,u),(y,v)\in \mathbb{R}^{d+k}$, $q(t,x,u,y,v)$ belongs to  $B_{ub}^{1,\delta}(\R^{d+k}\times\R^{d+k},\R^{d\times d})$ for some $\delta\in (0,1]$. It follows that for $x'=(x,u)\in\R^{d+k}$, $y'=(y,v)\in\R^{d+k}$, the partial derivative
$\left\Vert D_{x'}D_{y'} q(t,x,u,y,v)\right\Vert$ is uniformly bounded in $(x',y')$.
\end{itemize}

In particular,  Condition (H2) implies  $q\in B_{ub}^{0,1}$,  i.e., there exist positive constants $K_2$ and $K_3$ such that 
\begin{align*}
	&\left\Vert q(t,x,u,y,v)\right\Vert \leq K_2(1+\left\vert x\right\vert
	+\left\vert u\right\vert )(1+\left\vert y\right\vert +\left\vert
	v\right\vert );\\
	&\left\Vert   q(t,x,u,y,v)-q(t,x',u',y,v)-q(t,x,u, y',v')+q(t,x',u',y',v')\right\Vert\\
	 \leq&
	K_3\left(\left\vert x-x'\right\vert +\left\vert u-u'\right\vert \right)\left(\left\vert y-y'\right\vert +\left\vert v-v'\right\vert \right).
\end{align*}
This (the second inquality) also yields
\begin{equation}\label{e:condition-q2}
	\left\Vert q(t,x,u,x,u)-2q(t,x,u,y,v)+q(t,y,v,y,v)\right\Vert \leq
	2K_3(\left\vert x-y\right\vert^2 +\left\vert u-v\right\vert^2 ).
\end{equation}
Therefore, assuming  (H1) and (H2),  we can apply Theorem \ref{existence}  to  SDE (\ref{state equation}) which consequently has a unique solution $x^u(t)$ with $\E\left(\sup_{0\leq t\leq T}|x^u(t)|^2\right)<\infty$ for each $u\in \mathbf U([0,T])$.

Recall that the goal of the optimal control problem is to minimize the cost functional $J(u)$ over the set of admissible controls $\mathbf U[0,T]$. Suppose $\overline{u}\in \mathbf U[0,T]$ is an optimal control, i.e., 
\[J(\overline u)=\inf\limits_{u\in \mathbf U[0,T]}J(u), \]
and $\overline{x}:=x^{\overline{u}}\in \mathcal{S}^2([0,T];\mathbb  {R}^d)$ is the corresponding solution of the state equation (\ref{state equation}), then  $(\overline{{x}}{,}\overline{{u}})$ is called an optimal pair.  For  $u\in \mathbf U[0,T]$ and $\varepsilon\in[0,1]$, we define
\begin{equation*}
u^{\varepsilon}(t)=\overline{{u}}(t)+\varepsilon\big(u(t)-
\overline{{u}}(t)\big), ~~t\in[0,T]. 
\end{equation*}
Then, clearly $u^\varepsilon$ converges to $\overline{u}$ in $L^2(\Omega\times[0,T])$ as $\varepsilon$ goes to zero.
Recall that the control domain $U$ is convex,  and hence $u^{\varepsilon }$ belongs to  
$\mathbf U[0,T]$ for each $u\in\mathbf U[0,T]$, and we denote  by \[x^{\varepsilon }(t):=x^{u^{\varepsilon}}(t), ~~ t\in[0,T]\] the corresponding unique solution of  (\ref{state equation}) in $\mathcal{S}^2([0,T];\mathbb  {R}^d)$.

\begin{lemma}
	Assume (H1) and (H2). Let \[y^{\varepsilon}(t)=x^{\varepsilon}(t)-\overline{x}(t).\] Then, there exists a positive constant $C$  independent of  $\varepsilon$ such that
	\begin{equation}\label{e:L2bound-y}
	\E\left[\left\vert y^{\varepsilon}(t)\right\vert^2\right]\leq C \varepsilon^2.
	\end{equation}
\end{lemma}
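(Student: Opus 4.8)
The plan is to derive the equation satisfied by $y^\e$, apply the It\^o formula to $|y^\e(t)|^2$, take expectations, and close the estimate with Gronwall's inequality. Since the initial datum does not depend on the control, $y^\e(0)=0$, and subtracting the two copies of the state equation \eqref{state equation} gives
\begin{equation*}
y^\e(t)=\int_0^t\big[b(s,x^\e,u^\e)-b(s,\bar x,\bar u)\big]\,ds+N^\e(t),
\end{equation*}
where $N^\e(t):=\int_0^t\big[M(ds,x^\e,u^\e)-M(ds,\bar x,\bar u)\big]$ is a $d$-dimensional local martingale. Applying It\^o's formula and taking expectations, I expect the cross term coming from the drift and the It\^o correction term coming from $N^\e$ to each be controlled by $|y^\e|^2$ plus an $\e^2$-order remainder, while the stochastic integral against $N^\e$ is eliminated by a localization argument.

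For the drift part, I would split the increment of $b$ into its $x$- and $u$-variations and use the boundedness of $b_x$ and $b_u$ from (H1) to obtain $|b(s,x^\e,u^\e)-b(s,\bar x,\bar u)|\le K\big(|y^\e(s)|+\e\,|u(s)-\bar u(s)|\big)$, where I have used the crucial identity $u^\e-\bar u=\e(u-\bar u)$. Combined with the Cauchy--Schwarz and Young inequalities, the drift then contributes a term of the form $C\int_0^t\E|y^\e(s)|^2\,ds+C\e^2\,\E\int_0^t|u(s)-\bar u(s)|^2\,ds$.

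For the martingale part, by the covariation formula \eqref{e:-cov} the It\^o correction term produced by $|N^\e(t)|^2$ is the trace
\begin{equation*}
\tr\langle N^\e\rangle_t=\int_0^t\tr\big[q(s,x^\e,u^\e,x^\e,u^\e)-2q(s,x^\e,u^\e,\bar x,\bar u)+q(s,\bar x,\bar u,\bar x,\bar u)\big]\,ds,
\end{equation*}
and using $\tr A\le\sqrt d\,\|A\|$ together with \eqref{e:condition-q2} and $|u^\e-\bar u|=\e|u-\bar u|$, its integrand is dominated by $2\sqrt d\,K_3\big(|y^\e(s)|^2+\e^2|u(s)-\bar u(s)|^2\big)$. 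Hence this term is again of the form $C\int_0^t\E|y^\e(s)|^2\,ds+C\e^2\,\E\int_0^t|u(s)-\bar u(s)|^2\,ds$. Since $u,\bar u\in\mathbf U[0,T]$, the quantity $\E\int_0^T|u(s)-\bar u(s)|^2\,ds$ is a finite constant independent of $\e$, so adding the two contributions yields $\E|y^\e(t)|^2\le C_1\int_0^t\E|y^\e(s)|^2\,ds+C_2\e^2$, and Gronwall's inequality gives the claim.

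The main obstacle is the rigorous treatment of the stochastic integral $\int_0^t\langle y^\e(s),dN^\e(s)\rangle$, which is only a priori a local martingale because $M$ is a local martingale. I would introduce a localizing sequence of stopping times $\tau_n\uparrow T$ along which the stopped integral is a true, mean-zero martingale, establish the estimate on $[0,t\wedge\tau_n]$, and then pass to the limit using the a priori bound $\E\big(\sup_{0\le t\le T}|x^\e(t)|^2\big)<\infty$ together with the linear growth of $q$ from (H2) to justify the monotone/dominated convergence. This integrability, already guaranteed by Theorem \ref{existence}, is precisely what ensures the constant $C$ can be chosen uniformly in $\e$.
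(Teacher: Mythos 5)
Your proposal is correct and follows essentially the same route as the paper's proof: subtract the two copies of the state equation, apply It\^o's formula to $|y^{\varepsilon}(t)|^2$, bound the drift using the boundedness of $b_x$, $b_u$ from (H1) together with $u^{\varepsilon}-\overline{u}=\varepsilon(u-\overline{u})$, bound the bracket of the martingale difference using \eqref{e:condition-q2}, and close with Gronwall. The only minor divergence is in justifying that the stochastic-integral term has zero expectation: the paper proves it is a true mean-zero martingale directly via the Burkholder--Davis--Gundy inequality combined with (H2) and the square-integrability of $y^{\varepsilon}$, $x^{\varepsilon}$, $u^{\varepsilon}$, whereas you localize with stopping times and pass to the limit; both are standard and valid.
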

\begin{proof}
	Clearly, $y^{\varepsilon}(t)$ is a semimartingale  of the following form
	\begin{align*}
	y^{\varepsilon}(t)&=\int_0^t\Big[b\big(s,x^{\varepsilon}(s),u^{\varepsilon}(s)\big)-b\big(s,\overline{x}(s),\overline{u}(s)\big)\Big]ds\\&\hspace{1em}+\int_0^t M\big(ds,x^{\varepsilon}(s),u^{\varepsilon}(s)\big)-\int_0^tM\big(ds,\overline{x}(s),\overline{u}(s)\big).
	\end{align*}
	Applying It\^o's formula to $|y^{\varepsilon}(t)|^2$, we have
	\begin{align}
	\left\vert	y^{\varepsilon}(t)\right\vert^2&=2\int_0^t\la y^{\varepsilon}(s), b\big(s,x^{\varepsilon}(s),u^{\varepsilon}(s)\big)-b\big(s,\overline{x}(s),\overline{u}(s)\big)\ra ds\notag\\
	&+2\int_0^t\la y^{\varepsilon}(s),M\big(ds,x^{\varepsilon}(s),u^{\varepsilon}(s)\big)\ra-2\int_0^t\la y^{\varepsilon}(s),M\big(ds,\overline{x}(s),\overline{u}(s)\big)\ra\notag\\
	& +\sum\limits_{i=1}^d\la \int_0^{\cdot}M^i\big(ds,x^{\varepsilon}(s),u^{\varepsilon}(s)\big)-\int_0^{\cdot}M^i\big(ds,\overline{x}(s),\overline{u}(s)\big)\ra_t.\label{e:y2}
	\end{align}
 Here, we shall prove that $\E\int_0^t\la y^{\varepsilon}(s),M\big(ds,x^{\varepsilon}(s),u^{\varepsilon}(s)\big)\ra$  is equal to zero. Since \[M^\e_t:=\int_0^tM\big(ds,x^\varepsilon(s),u^\varepsilon(s)\big)\] is a continuous $\R^d$-valued local martingale and $y^\varepsilon(t)$ is square integrable, the stochastic integral $\int_0^t\la y^{\varepsilon}(s),dM^\e_s\ra$ is a local martingale as well. Then, it remains to show that the local martingale $\int_0^t\la y^{\varepsilon}(s),dM^\e_s\ra$ is also a martingale.
	The Burkholder--Davis--Gundy inequality yields
	\begin{align*}
	&\E\sup\limits_{0\leq t\leq T}\left|\int_0^t\la y^{\varepsilon}(s),M\big(ds,x^{\varepsilon}(s),u^{\varepsilon}(s)\big)\ra\right|\\
	&\leq C\sum\limits_{j=1}^d\E\left(\int_0^T|y_j^{\varepsilon}(t)|^2q_{jj}\big(t,x^{\varepsilon}(t),u^{\varepsilon}(t),x^{\varepsilon}(t),u^{\varepsilon}(t)\big)dt\right)^{\frac 12}\\&\leq C\sum\limits_{j=1}^d\E\left(\sup\limits_{0\leq t\leq T}|y_j^{\varepsilon}(t)|^2+\int_0^T q_{jj}\big(t,x^{\varepsilon}(t),u^{\varepsilon}(t),x^{\varepsilon}(t),u^{\varepsilon}(t)\big)dt\right)\\&<\infty, 
	\end{align*}
	where the last inequality follows from (H2) and the integrability of $y^\varepsilon$, $x^\varepsilon$ and $u^\varepsilon$. Hence
	\[\E\int_0^t\la y^{\varepsilon}(s),M\big(ds,x^{\varepsilon}(s),u^{\varepsilon}(s)\big)\ra=0. \]
	Similarly, we can also show \[\E\int_0^t\la y^{\varepsilon}(s),M\big(ds,\overline{x}(s),\overline{u}(s)\big)\ra=0.\]
	Now taking expectation for both sides of \eqref{e:y2}, we have
	\begin{align}
	\E\left\vert y^{\varepsilon}(t)\right\vert^2&\leq C\E\left(\int_0^t\left\vert y^{\varepsilon}(s)\right\vert^2+\varepsilon^2\left\vert u(s)-\overline{u}(s)\right\vert^2ds\right)\notag\\
	&\quad+\sum\limits_{i=1}^d\E\int_0^t\Big[ q_{ii}\big(s,x^{\varepsilon}(s),u^{\varepsilon}(s),x^{\varepsilon}(s),u^{\varepsilon}(s)\big)\notag\\
	&\quad- 2q_{ii}\big(s,x^{\varepsilon}(s),u^{\varepsilon}(s),\overline{x}(s),\overline{u}(s)\big)+q_{ii}\big(s,\overline{x}(s),\overline{u}(s),\overline{x}(s),\overline{u}(s)\big)\Big]ds\notag\\
	&\leq C\E\left(\int_0^t\left\vert y^{\varepsilon}(s)\right\vert^2+\varepsilon^2\left\vert u(s)-\overline{u}(s)\right\vert^2ds\right),\label{e:gronwall-y}
	\end{align}
	where in the first inequality we use the Lipschitz property of $b$ and the fact that $2|\la x,y\ra |\leq |x|^2+|y|^2$, and the second inequality follows from \eqref{e:condition-q2}.

Finally, the desired result \eqref{e:L2bound-y} follows from	applying Gronwall's inequality to \eqref{e:gronwall-y}.
\end{proof}

\subsection{Variational equation}\label{sec:vq}

Assume Conditions (H1) and  (H2). From Theorem \ref{differentiability},  $M(t,x,u)$ has a modification of  a continuous $C^{1, \delta'}$-local martingale for any $\delta'\in(0,\delta)$.   In particular, the modification, denoted by $M(t,x,u)$ again, is differentiable with respect to $x$ and $u$.  Moreover, the partial derivatives $\partial_xM(t,x,u)$ and $\partial_uM(t,x,u)$ are continuous local martingales. 

 For notational simplicity, throughout the rest of this article, we write \[dM(t)=M(dt):=M\big(dt,\overline x(t),\overline u(t)\big),\] where $M(t)=\int_0^tM\big(ds,\overline x(s),\overline u(s)\big)$ is a continuous local martingale. We also adopt the following notations,
\[b_{x}(t)=b_{x}\big( t,\overline{x}(t),\overline{u}(t)\big), ~~ b_{u}(t)=b_{u}\big( t,\overline{x}(t),\overline{u}(t)\big),\]
\[\partial
_{x}M(dt)=\partial _{x}M\big(dt,\overline{x}(t),\overline{u}(t)\big),~~
\partial
_{u}M(dt)=\partial _{u}M\big(dt,\overline{x}(t),\overline{u}(t)\big),\] 
where 
\begin{equation*}
b_{x}(t)=\left(\partial_{x_j}b^{i}(t)\right)_{d\times d}=\left[ 
\begin{array}{ccc}
b^1_{x_{1}}(t) & \cdots & b^1_{x_{d}}(t) \\ 
\vdots &  & \vdots \\ 
b^d_{x_{1}}(t) & \cdots & b^d_{x_{d}}(t)
\end{array}
\right],
\end{equation*}
The other matrices $b_u(t), \partial_x M(dt),$ and  $ \partial_u M(dt)$ are defined similarly.

Let $\widehat{x}(t)\in\R^d$ be the solution to the following SDE
\begin{equation}\label{variational equation}
\left\{ 
\begin{array}{rl}
d\widehat{x}(t)= & \big( b_{x}(t) \widehat{x}(t)+b_{u}(t)\left( u(t)-\overline{u}(t)\right) \big) dt+\partial _{x}M(dt)
\widehat{x}(t)+\partial _{u}M(dt) \big( {{u(t)}-\overline{u}(t)}\big), \\ 
\widehat{x}(0)= & 0.
\end{array}
\right.
\end{equation}
Here the multiplication used in $\partial _{x}M(dt)
\widehat{x}(t)$ and $\partial _{u}M(dt) \big( u(t)-\overline{u}(t)\big)$ is the matrix multiplication, for instance, 
\[\partial_xM(dt)\widehat{x}(t)=\left(
\begin{array}{c}
\sum\limits_{j=1}^{d} \widehat{x}_j(t)\partial_{x_j}M^{1}(dt)  \\ 
\vdots  \\ 
\sum\limits_{j=1}^{d} \widehat{x}_j(t)\partial_{x_j}M^{d}(dt)
\end{array}
\right).\]

Now, we show that SDE \eqref{variational equation} has a unique solution in $\mathcal S^2([0,T], \R^d)$. If we denote
	\[\tilde b\big(t,\widehat x(t)\big)=b_{x}(t) \widehat{x}(t)+b_{u}(t)\big( u(t)-\overline{u}(t)\big),\] 
	and 
	\[\int_0^t \tilde M\big(ds,\widehat x(s)\big)=\int_0^t\partial _{x}M(ds)
	\widehat{x}(s)+\int_0^t\partial _{u}M(ds)\big( u(s)-\overline{u}(s)\big),\]
	i.e.,
	\[\tilde M(t,x)=\int_0^t\partial _{x}M(ds)
	x+\int_0^t\partial _{u}M(ds)\big( u(s)-\overline{u}(s)\big).\]
	Then, the varational equation (\ref{variational equation}) becomes
	\begin{equation}
	\left\{ 
	\begin{array}{rl}
	d\widehat{x}(t)= & \tilde b(t,\widehat x(t)) dt+\tilde M\big(dt,\widehat x(t)\big),\\ 
	\widehat{x}(0)= & 0,
	\end{array}
	\right.
	\end{equation}
	which has the same form as (\ref{state equation})
with  the local characteristic $\tilde q(t,x,y)$  of $\tilde M$ being
	\begin{align*}
	\big(\tilde {q}(t,x,y)\big)_{ij}&=x^*\left(\frac{\partial^2q_{ij}\big(t,\overline x(t),\overline u(t),\overline x(t),\overline u(t)\big)}{\partial x \partial y}\right) y\\&+x^*\left(\frac{\partial^2 q_{ij}\big(t,\overline x(t),\overline u(t),\overline x(t),\overline u(t)\big)}{\partial x \partial v}\right) \big(u(t)-\overline u(t)\big)\\&+\big(u(t)-\overline u(t)\big)^*\left(\frac{\partial^2 q_{ij}\big(t,\overline x(t),\overline u(t),\overline x(t),\overline u(t)\big)}{\partial u \partial y}\right) y\\&+\big(u(t)-\overline u(t)\big)^*\left(\frac{\partial^2 q_{ij}\big(t,\overline x(t),\overline u(t),\overline x(t),\overline u(t)\big)}{\partial u \partial v}\right) \big(u(t)-\overline u(t)\big).
	\end{align*}
	It can be easily observed that  $\tilde b$ and $\tilde q$ are uniformly Lipschitz continuous and satisfy the following generalized linear growth condition
\[|\tilde{b}(t,x)|\leq C(|a_t|+|x|),\]
	\[\left\Vert\tilde {q}(t,x,y)\right\Vert\leq C(1+|a_t||x|)(1+|a_t||y|),\]
where $\{a_t\}_{t\in [0,T]}$ is an adapted  square integrable process.
	Then, using the same argument as that in Kunita's proof in \cite{Kunita90} for Theorem \ref{existence}  yields that SDE (\ref{variational equation}) has a unique solution with
 \[
	\E\left[\sup\limits_{0\leq t\leq T}\left\vert \widehat{x}(t)\right\vert ^{2}\right]<\infty. \]

We refer to  \eqref{variational equation} as the \emph{variational equation}  along the optimal pair $(\overline{x},\overline{u})$. We will show in Proposition \ref{estimate} that $ \frac{x^{\varepsilon }(t)-\overline{x}(t)}{
	\varepsilon }$ converges to $\widehat{x}(t)$ in $L^2(\Omega)$ as $\e$ goes to $0$. Set
\begin{equation}\label{e:eta-e}
\eta ^{\varepsilon }(t)=\frac{x^{\varepsilon }(t)-\overline{x}(t)}{
	\varepsilon }-\widehat{x}(t).
\end{equation}

\begin{proposition}\label{estimate}
	Under assumptions (H1) and (H2), for any fixed $T>0$, we  have  
	\begin{equation}\label{e:etae0}
	\displaystyle\lim_{\varepsilon \rightarrow 0}\sup_{0\leq t\leq T}\E\left\vert \eta
	^{\varepsilon }(t)\right\vert ^{2}=0.
	\end{equation}
\end{proposition}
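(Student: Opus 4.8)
The plan is to derive a single SDE satisfied by $\eta^\varepsilon$ and then close a Gronwall estimate on $\E|\eta^\varepsilon(t)|^2$, in the same spirit as the preceding lemma but now tracking the first-order remainder. Write $y^\varepsilon=x^\varepsilon-\overline x$, so that $\eta^\varepsilon=\varepsilon^{-1}y^\varepsilon-\widehat x$, and expand the increments of $y^\varepsilon$ by the fundamental theorem of calculus along the segment joining $(\overline x,\overline u)$ to $(x^\varepsilon,u^\varepsilon)$. With the shorthand
$$\Theta_\lambda(s)=\Big(\overline x(s)+\lambda y^\varepsilon(s),\,\overline u(s)+\lambda\varepsilon\big(u(s)-\overline u(s)\big)\Big),\qquad \lambda\in[0,1],$$
one has $\varepsilon^{-1}\big[b(s,x^\varepsilon,u^\varepsilon)-b(s,\overline x,\overline u)\big]=\int_0^1\big[b_x(s,\Theta_\lambda)\varepsilon^{-1}y^\varepsilon+b_u(s,\Theta_\lambda)(u-\overline u)\big]d\lambda$, together with the analogous identity for the martingale differential $\varepsilon^{-1}\big[M(ds,x^\varepsilon,u^\varepsilon)-M(ds,\overline x,\overline u)\big]$, in which $b_x,b_u$ are replaced by the spatial martingale derivatives $\partial_xM,\partial_uM$ supplied by Theorem \ref{differentiability}. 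Substituting $\varepsilon^{-1}y^\varepsilon=\eta^\varepsilon+\widehat x$ and subtracting the variational equation \eqref{variational equation}, I obtain an SDE for $\eta^\varepsilon$ of the same type as \eqref{state equation}: its coefficients split into a part that is linear in $\eta^\varepsilon$, with coefficients $\int_0^1 b_x(s,\Theta_\lambda)\,d\lambda$ in the drift and $\int_0^1\partial_xM(ds,\Theta_\lambda)\,d\lambda$ in the martingale term, plus a remainder built from the differences $\int_0^1[b_x(s,\Theta_\lambda)-b_x(s)]\,d\lambda$, $\int_0^1[\partial_xM(ds,\Theta_\lambda)-\partial_xM(ds)]\,d\lambda$ (and the corresponding $u$-differences) tested against $\widehat x$ and $u-\overline u$.

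Next I would apply It\^o's formula to $|\eta^\varepsilon(t)|^2$ and take expectations. The pure stochastic-integral term vanishes by the same Burkholder--Davis--Gundy/localization argument used in the preceding lemma, using the square-integrability of $\eta^\varepsilon$ and the uniform bound on $q$ from (H2). The surviving drift contribution coming from the linear part is bounded by $C\int_0^t\E|\eta^\varepsilon(s)|^2\,ds$ via the boundedness of $b_x$ in (H1) and $2|\la a,b\ra|\le|a|^2+|b|^2$, while the drift contribution from the remainder is bounded, through the same inequality, by $\int_0^t\E|\eta^\varepsilon(s)|^2\,ds$ plus an error term.

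The quadratic-variation term is where the theory of Section \ref{sec:pre} enters. By Theorem \ref{interchange}, the joint quadratic variation of $\partial_{x_j}M^i$ and $\partial_{x_{j'}}M^{i'}$ has density $\partial_{x_j}\partial_{y_{j'}}q_{ii'}$, which is uniformly bounded by (H2); hence the quadratic variation of the linear martingale part is again dominated by $C\int_0^t\E|\eta^\varepsilon(s)|^2\,ds$ and feeds the Gronwall estimate. The remainder contributions, both in the drift and in the quadratic variation, are products of $\E\int_0^T(|\widehat x|^2+|u-\overline u|^2)\,ds<\infty$ with a modulus measuring how far $\int_0^1 b_x(s,\Theta_\lambda)\,d\lambda$, $\int_0^1\partial_{x}\partial_{y}q(s,\Theta_\lambda)\,d\lambda$, etc., sit from their values at $(\overline x,\overline u)$. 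Since $\E|y^\varepsilon(t)|^2\le C\varepsilon^2$ by \eqref{e:L2bound-y}, we have $\Theta_\lambda\to(\overline x,\overline u)$ in $L^2$ as $\varepsilon\to0$, so by continuity of $b_x,b_u$ and of the second derivatives of $q$, together with dominated convergence (dominated by the uniform bounds of (H1) and (H2)), this error $\rho(\varepsilon)\to0$. Collecting the estimates gives $\E|\eta^\varepsilon(t)|^2\le\rho(\varepsilon)+C\int_0^t\E|\eta^\varepsilon(s)|^2\,ds$, and Gronwall's inequality yields $\sup_{0\le t\le T}\E|\eta^\varepsilon(t)|^2\le\rho(\varepsilon)e^{CT}\to0$, as claimed.

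The main obstacle is the treatment of the martingale expansion and of the quadratic variation of the $\lambda$-averaged martingale. Interchanging the Lebesgue integral $\int_0^1 d\lambda$ with the generalized It\^o integral $M(ds,\cdot)$ requires a stochastic Fubini argument within Kunita's calculus, and identifying $\big\langle\int_0^\cdot\int_0^1\partial_xM(ds,\Theta_\lambda)\eta^\varepsilon\,d\lambda\big\rangle$ relies on Theorem \ref{interchange} combined with the uniform $B^{1,\delta}_{ub}$ control from (H2) to guarantee that the resulting double $\lambda$-integral of second derivatives of $q$ converges and is uniformly dominated. Showing that the remainder's quadratic variation genuinely vanishes — that is, that $\int_0^1\partial_{x}\partial_{y}q(s,\Theta_\lambda)\,d\lambda$ converges to $\partial_{x}\partial_{y}q(s,\overline x,\overline u)$ in the sense required after multiplication by $\widehat x$ and integration — is the step where the H\"older/continuity content of (H2) and the $L^2$ convergence of $y^\varepsilon$ must be combined with care.
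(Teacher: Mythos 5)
Your proposal is correct and follows essentially the same route as the paper's own proof: the same fundamental-theorem-of-calculus expansion along the segment $\Theta_\lambda$, the same splitting of the resulting SDE for $\eta^\varepsilon$ into a part linear in $\eta^\varepsilon$ plus a remainder tested against $\widehat x$ and $u-\overline u$, the same use of Theorem \ref{interchange} and (H2) to bound the quadratic-variation densities, and the same Gronwall-plus-dominated-convergence conclusion driven by the bound $\E|y^\varepsilon(t)|^2\le C\varepsilon^2$. The only cosmetic difference is that you apply It\^o's formula to $|\eta^\varepsilon(t)|^2$, whereas the paper estimates $\E|\eta^\varepsilon(t)|^2$ directly by squaring the integral representation and using Doob/It\^o-isometry bounds on the martingale terms; the two estimates are interchangeable.
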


\begin{proof}
	From the state equation (\ref{state equation}) and variational equation (\ref{variational equation}),  we have
	\begin{align*}
	\eta ^{\varepsilon }(t) &=\frac{1}{\varepsilon }\left\{\int_{0}^{t}{b}\big( s,x^{\varepsilon }(s),u^{\varepsilon }(s)\big) -b\big(s,\overline{x}(s),\overline{u}(s)\big)ds+\int_{0}^{t}M\big( ds,x^{\varepsilon }(s),u
	^{\varepsilon }(s)\big)\right. \\
	&\hspace{2em}\left.-\int_{0}^{t}M\big (ds,\overline{x}(s),\overline{u}(s)\big)
	- \varepsilon \int_{0}^{t}\big(b_{x}\left( s\right) \widehat{x}(s)+b_{u}\left( s\right) \left( {u(s)}-\overline{u}(s)\right) \big) ds \right.\\
	&\hspace{2em}\left.-\varepsilon \int_{0}^{t}\partial _{x}M\big(ds,\overline{x}(s),\overline{u}
	(s)\big) \widehat{x}(s)-\varepsilon \int_{0}^{t}\partial _{u}M\big(ds,\overline{x}
	(s),\overline{u}(s)\big)\big( u(s)-\overline{u}(s)\big) \right\}.
	\end{align*}
	Denote		
	\begin{equation*}
	\begin{array}{rl}
	A_{\varepsilon }(t)= & \int_{0}^{1}b_{x}\big(t,\overline{x}(t)+\lambda (
	x^{\varepsilon }(t)-\overline{x}(t)),\overline{u}(t)+\lambda \varepsilon (
	u(t)-\overline{u}(t))\big)d\lambda, \\ 
	B_{\varepsilon }(dt)= & \int_{0}^{1}\partial _{x}M\big(dt,\overline{x}(t)
	+\lambda (x^{\varepsilon }(t)-\overline{x}(t)),\overline{u}(t)+\lambda
	\varepsilon (u(t)-\overline{u}(t))\big)d\lambda,\\ 
	C_{\varepsilon }(t)= & \int_{0}^{1}b_{u}\big(t,\overline{x}(t)+\lambda (x^{\varepsilon }(t)-\overline{x}(t)),\overline{u}(t)+\lambda \varepsilon (
	u(t)-\overline{u}(t))\big)d\lambda,
	\\ 
	D_{\varepsilon }(dt)= & \int_{0}^{1}\partial _{u}M\big(dt,\overline{x}(t)
	+\lambda (x^{\varepsilon }(t)-\overline{x}(t)),\overline{u}(t)+\lambda
	\varepsilon (u(t)-\overline{u}(t))\big)d\lambda, \\ 
	\varphi _{\varepsilon }(t)= & \left[ A_{\varepsilon }(t)-b_{x}(t)\right] 
	\widehat{x}(t)+\left[ C_{\varepsilon }(t)-b_{u}(t)\right] (u(t)-\overline{
		u}(t)), \\ 
	\psi _{\varepsilon }(dt)= & \left[ B_{\varepsilon }(dt)-\partial _{x}M(dt)
	\right] \widehat{x}(t)+\left[ D_{\varepsilon }(dt)-\partial _{u}M(dt)\right] (
	u(t)-\overline{u}(t)).
	\end{array}
	\end{equation*}
	
Using the fact that for a continuously differentiable function $f(x,y): \R^d\times \R^k\to \R^d$ and 
	$\alpha\in \R^d, \beta\in \R^k$,   \[\int_0^1  \big(f_x(x+\alpha \lambda, y+\beta \lambda)\alpha+ f_y(x+\alpha \lambda, y+\beta \lambda) \beta \big) d\lambda = f(x+\alpha, y+\beta)-f(x,y),\] we have
	\begin{equation}
	\left\{ \begin{array}{rl}
	d\eta ^{\varepsilon }(t)= & \big[ A_{\varepsilon }(t)\eta ^{\varepsilon}(t)+\varphi _{\varepsilon }(t)\big] dt+\big[ B_{\varepsilon }(dt)\eta
	^{\varepsilon }(t)+\psi _{\varepsilon }(dt)\big], \\ 
	\eta ^{\varepsilon }(0)= & 0. 
	\end{array}
	\right.
	\end{equation}
	Therefore, 
	\begin{align*}
	\E\left\vert \eta ^{\varepsilon }(t)\right\vert ^{2}&=\sum\limits_{i=1}^{d}\E\left\vert \int_{0}^{t}\big( A_{\varepsilon
	}^{i}(s)\eta ^{\varepsilon }(s)+\varphi _{\varepsilon }^{i}(s)\big)
	ds+\int_{0}^{t}\big( B_{\varepsilon }^{i}(ds)\eta ^{\varepsilon }(s)+\psi
	_{\varepsilon }^{i}(ds)\big) \right\vert ^{2} \\
	&\leq \sum\limits_{i=1}^{d}C\E\Bigg(\left\vert \int_{0}^{t}A_{\varepsilon}^{i}(s)\eta ^{\varepsilon }(s)ds\right\vert ^{2}+\left\vert
	\int_{0}^{t}B_{\varepsilon }^{i}(ds)\eta ^{\varepsilon }(s)\right\vert	^{2}\\
	& \qquad\qquad \qquad\qquad \qquad +\left\vert \int_{0}^{t}\varphi _{\varepsilon }^{i}(s)ds\right\vert
	^{2}+\left\vert \int_{0}^{t}\psi _{\varepsilon }^{i}(ds)\right\vert ^{2}
	\Bigg)\\
	&\leq C\left(\E \int_{0}^{T}\left\vert \eta ^{\varepsilon }(s)\right\vert
	^{2}ds+ J_{\varepsilon}(t)\right),
	\end{align*}
	where
	\begin{equation*}
	J_{\varepsilon }(t)=\sum\limits_{i=1}^{d}\E\left( \left\vert \int_{0}^{t}\varphi
	_{\varepsilon }^{i}(s)ds\right\vert ^{2}+\left\vert \int_{0}^{t}\psi
	_{\varepsilon }^{i}(ds)\right\vert ^{2}\right).
	\end{equation*}	
For simplicity of notations, we denote
\begin{align}\label{sh-notation}
	x_{\lambda,\varepsilon}(t)&=\overline x(t)+\lambda\big(x^{\varepsilon}(t)-\overline x(t)\big),\notag\\
	u_{\lambda,\varepsilon}(t)&=\overline u(t)+\lambda\varepsilon\big(u(t)-\overline u(t)\big).
	\end{align}
	Here, the last inequality holds because of the boundedness of $b_{x}$ 	from assumption (H1) and the following estimation{\color{blue}:}
	\begin{align*}
	&\sum\limits_{i=1}^{d}\E\left\vert \int_{0}^{t}B_{\varepsilon }^{i}(ds)
	\eta ^{\varepsilon }(s)\right\vert ^{2}\\ =&\sum\limits_{i=1}^{d}\E\left\vert \sum\limits_{j=1}^{d}
	\int_{0}^{t} \eta
	_{j}^{\varepsilon }(s)\int_{0}^{1}\partial_{x_{j}}M^{i}\big(ds,x_{\lambda ,\varepsilon }(s),u_{\lambda ,\varepsilon }(s)\big)d\lambda  \right\vert ^{2} \\
	\leq&\sum\limits_{i=1}^{d}\sum\limits_{j=1}^{d}C\E\left\vert\int_{0}^{t} \eta _{j}^{\varepsilon }(s)\int_{0}^{1}\partial  _{x_{j}}M^{i}\big(ds,x_{\lambda ,\varepsilon}(s),u_{\lambda ,\varepsilon}(s)\big)d\lambda \right\vert ^{2} \\
	\leq&\sum\limits_{i=1}^{d}\sum\limits_{j=1}^{d}C\E\int_{0}^{T}\left\vert\eta _{j}^{\varepsilon }(s)\right\vert ^{2}d \,\la \int_0^\cdot\int_{0}^{1}\partial
	_{x_{j}}M^{i}\big(dr,x_{\lambda ,\varepsilon }(r),u_{\lambda ,\varepsilon}(r)\big)d\lambda \ra_s \\
	=&\sum\limits_{i=1}^{d}\sum\limits_{j=1}^{d}C\E\int_{0}^{T}\left\vert \eta_{j}^{\varepsilon }(s)\right\vert ^{2}\left(\int_{0}^{1}\int_{0}^{1}\frac{\partial^{2}q_{ii}\big(s,x_{\lambda_1 ,\varepsilon }(s),u_{\lambda_1 ,\varepsilon}(s),x_{\lambda_2 ,\varepsilon }(s),u_{\lambda_2 ,\varepsilon}(s)\big)}{\partial
		x_{j}\partial y_{j}}d\lambda_1 d\lambda_2 \right)ds \\
	\leq& C\E\int_{0}^{T}\left\vert \eta ^{\varepsilon }(s)\right\vert ^{2}ds. 
	\end{align*}
		Clearly,
	\begin{align*}
	\sup_{0\leq t\leq T}\E\left\vert \eta ^{\varepsilon }(t)\right\vert ^{2}&\leq C\left(\E\int_{0}^{T}\left\vert \eta ^{\varepsilon }(s)\right\vert
	^{2}ds +\sup_{0\leq t\leq T}J_{\varepsilon}(t)\right)\\
	&\leq C\left(\int_{0}^{T}\sup_{0\leq r\leq s}\E\left\vert \eta ^{\varepsilon }(r)\right\vert
	^{2}ds +\sup_{0\leq t\leq T}J_{\varepsilon}(t)\right).
	\end{align*}
		From Gronwall's lemma, we can
	obtain 
	\begin{equation}\label{e:etae}
	\sup_{0\leq t\leq T}\E\left\vert \eta ^{\varepsilon }(t)\right\vert ^{2}\leq C e^{CT}
	\left(\sup_{0\leq t\leq T}J_{\varepsilon }(t)\right).
	\end{equation}
Now, to obtain the desired result,  it suffices to show 	that $\sup\limits_{0\leq t\leq T} J_\varepsilon(t) \rightarrow 0$ as $\varepsilon \rightarrow 0$. Note that
	\begin{align}
	\sup\limits_{0\leq t\leq T}J_{\varepsilon}(t)&=\sup\limits_{0\leq t\leq T}\sum\limits_{i=1}^{d}\E\left( \left\vert \int_{0}^{t}\varphi
	_{\varepsilon }^{i}(s)ds\right\vert ^{2}+\left\vert \int_{0}^{t}\psi
	_{\varepsilon }^{i}(ds)\right\vert ^{2}\right)\notag\\
	&\leq \sum\limits_{i=1}^{d}\E\sup\limits_{0\leq t\leq T}\left( \left\vert \int_{0}^{t}\varphi
	_{\varepsilon }^{i}(s)ds\right\vert ^{2}+\left\vert \int_{0}^{t}\psi
	_{\varepsilon }^{i}(ds)\right\vert ^{2}\right).\label{e:Je}
	\end{align}
	For the first term on the right-hand side of \eqref{e:Je}, we have
	\begin{align*}
	&\sum\limits_{i=1}^{d}\E\sup\limits_{0\leq t\leq T}\left\vert \int_{0}^{t}\varphi _{\varepsilon}^{i}(s)ds\right\vert ^{2} \leq C\sum\limits_{i=1}^{d}\E \int_{0}^{T}\left\vert\varphi _{\varepsilon}^{i}(s)\right\vert^2 ds\\
	= &C\sum\limits_{i=1}^{d}\E\int_{0}^{T}\left\vert \big(A_{\varepsilon}^{i}(s)-b_{x}^{i}(s)\big)\widehat x(s)
	+ \big(C_{\varepsilon}^{i}(s)-b_{u}^{i}(s)\big) \big(u(s)-\overline{u}(s)\big)\right\vert^2ds\\
	\leq& C\E\int_{0}^{T} \left(\left\Vert A_{\varepsilon}(s)-b_{x}(s)\right\Vert^2 \left\vert\widehat{x}(s)\right\vert^2 
	+ \left\Vert C_{\varepsilon}(s)-b_{u}(s)\right\Vert^2 \left\vert u(s)-\overline{u}(s)\right\vert^2 \right)ds\\
	\leq& C\E\int_{0}^{T}\int_{0}^{1}\Bigg(\left\Vert b_{x}(s,x_{\lambda ,\varepsilon }(s),u_{\lambda ,\varepsilon }(s))-b_{x}(s)\right\Vert ^{2}\left\vert \widehat{x}(s)\right\vert ^{2}\\
	&\qquad \qquad\qquad  +\left\Vert b_{u}(s,x_{\lambda ,\varepsilon }(s),u_{\lambda ,\varepsilon }(s))-b_{u}(s)\right\Vert ^{2}\left\vert u(s)-\overline{u}(s)\right\vert ^{2}\Bigg) d\lambda ds.
	\end{align*}
	Thus, 	using the dominated convergence theorem, we can conclude 	that
	\begin{equation}\label{e:varphie}
	\lim_{\varepsilon \rightarrow 0}\sup\limits_{0\leq t\leq T}\sum\limits_{i=1}^{d}\E\left\vert
	\int_{0}^{t}\varphi _{\varepsilon }^{i}(s)ds\right\vert ^{2}=0.
	\end{equation}
	For the second term on the right-hand side of \eqref{e:Je}, 
	\begin{align}
	&\sum\limits_{i=1}^{d}\E\sup\limits_{0\leq t\leq T}\left\vert \int_{0}^{t}\psi _{\varepsilon}^{i}(ds)\right\vert ^{2}\notag\\
	=&\sum\limits_{i=1}^{d}\E\sup\limits_{0\leq t\leq T}\left\vert\int_0^t\big[B_{\varepsilon}^{i}(ds)-\partial_{x}M^{i}(ds)\big]\widehat{x}(s)+\big[D_{\varepsilon}^{i}(ds)-\partial_{u}M^{i}(ds)\big]\big(u(s)-\overline{u}(s)\big)\right\vert^2\notag\\
	\leq&\sum\limits_{i=1}^{d}C\E\sup\limits_{0\leq t\leq T}\Bigg(\left\vert\sum\limits_{j=1}^d\int_0^t\widehat{x}_j(s)\big[B_{\varepsilon}^{ij}(ds)-\partial_{x_j}M^{i}(ds)\big]\right\vert^2\notag\\
	&\qquad  \qquad \qquad +
	\left\vert\sum\limits_{l=1}^k\int_0^t\big(u_l(s)-\overline{u}_l(s)\big) \big[D_{\varepsilon}^{il}(ds)-\partial_{u_l}M^{i}(ds)\big]\right\vert^2\Bigg)\notag\\
	\leq&\sum\limits_{i=1}^{d}C\E\sup\limits_{0\leq t\leq T}\Bigg(\sum\limits_{j=1}^d\left\vert\int_0^t\widehat{x}_j(s)\big[B_{\varepsilon}^{ij}(ds)-\partial_{x_j}M^{i}(ds)\big]\right\vert^2\notag\\
	&\qquad  \qquad \qquad +\sum\limits_{l=1}^k\left\vert\int_0^t\big(u_l(s)-\overline{u}_l(s)\big) \big[D_{\varepsilon}^{il}(ds)-\partial_{u_l}M^{i}(ds)\big]\right\vert^2\Bigg)\notag\\
	\leq&C\sum\limits_{i=1}^{d}\E\Bigg(\sum\limits_{j=1}^{d}\int_0^T\left\vert\widehat{x}_j(s)\right\vert^2d\la\int_0^\cdot\int_0^1\partial_{x_j}M^i\big(dr,x_{\lambda,\varepsilon}(r),u_{\lambda,\varepsilon}(r)\big)d\lambda-\int_0^\cdot\partial_{x_j}M^{i}(dr)\ra_s\notag\\&+\sum\limits_{l=1}^k\int_0^T\left\vert u_l(s)-\overline{u}_l(s)\right\vert^2d\la\int_0^\cdot\int_0^1\partial_{u_l}M^i\big(dr,x_{\lambda,\varepsilon}(r),u_{\lambda,\varepsilon}(r)\big)d\lambda-\int_0^\cdot\partial_{u_l}M^{i}(dr)\ra_s\Bigg). \label{e:psie}
	\end{align}
	
	Note that
	\begin{align*}
	&\la \int_0^\cdot\int_0^1\partial_{x_j}M^i\big(dr,x_{\lambda,\varepsilon}(r),u_{\lambda,\varepsilon}(r)\big)d\lambda-\int_0^\cdot\partial_{x_j}M^{i}(dr)\ra_s \\=&\int_0^s\left(\int_{0}^{1}\int_{0}^{1}\frac{\partial^{2}q_{ii}\big(s,x_{\lambda_{1},\varepsilon }(r),u_{\lambda_{1},\varepsilon }(r),x_{\lambda_{2},\varepsilon }(r),u_{\lambda_{2},\varepsilon }(r)\big)}{
		\partial x_{j}\partial y_{j}}d\lambda _{1}d\lambda _{2}\right.\\
	&\hspace{2em}\left. +\frac{\partial
		^{2}q_{ii}\big(r,\overline{x}(r),\overline{u}(r),\overline{x}(r),\overline{u}
		(r)\big)}{\partial x_{j}\partial y_{j}}-2\int_{0}^{1}\frac{\partial^{2}q_{ii}\big(r,x_{\lambda,\varepsilon }(r),u_{\lambda,\varepsilon }(r),{\overline{x}(r)},\overline{u}(r)\big)}{\partial x_{j}\partial y_{j}}d\lambda \right)dr.
	\end{align*}
	 Recall that in (H2){\color{blue},} we assume $q\in B^{1,\delta}_{ub}$ which yields that  the  partial derivatives $\frac{\partial^2 q}{\partial x_i \partial y_j}$ of $q$ are uniformly bounded. Thus, we have 
	\begin{align}\label{1}
	\sum\limits_{i=1}^{d}\sum\limits_{j=1}^{d}\E\int_0^T\left\vert\widehat{x}_j(s)\right\vert^2d\la\int_0^\cdot\int_0^1\partial_{x_j}M^i\big(dr,x_{\lambda,\varepsilon}(r),u_{\lambda,\varepsilon}(r)\big)d\lambda-\int_0^\cdot\partial_{x_j}M^{i}(dr)\ra_s
	\end{align}is finite.  Furthermore,  (H2) 	implies the continuity of $\frac{\partial^2 q}{\partial x_i \partial y_j}$, and hence \eqref{1} converges to 0 as $\varepsilon \to 0$. The same analysis can be  applied to
	\begin{align*}
	\sum\limits_{i=1}^{d}\sum\limits_{l=1}^k\E\int_0^T\left\vert u_l(s)-\overline{u}_l(s)\right\vert^2d\la\int_0^\cdot\int_0^1\partial_{u_l}M^i\big(dr,x_{\lambda,\varepsilon}(r),u_{\lambda,\varepsilon}(r)\big)d\lambda-\int_0^\cdot\partial_{u_l}M^{i}(dr)\ra_s.
	\end{align*} 
		Then, using the dominated convergence theorem, we have
	\begin{equation*}
	\lim_{\varepsilon \rightarrow 0}\sum\limits_{i=1}^{d}
	\E\left(\sup\limits_{0\leq t\leq T}\left\vert
	\int_{0}^{t}\psi _{\varepsilon }^{i}(ds)\right\vert^2\right) =0.
	\end{equation*}
	The proof is complete.
\end{proof}

\begin{theorem}\label{derivatives}
	Assume (H1) and (H2). Then we have 
	\begin{equation*}
	\lim_{\varepsilon \rightarrow 0}\frac{J(u^{\varepsilon })-J(\overline{u})}{\varepsilon }=\E\left\{\int_0^T\left[f_x(t)\widehat{x}(t)+f_u(t)\big(u(t)-\overline{u}(t)\big)\right]dt+\Phi _x\big({\overline{x}}(T)\big) \widehat{x}(T)\right\}.
	\end{equation*}
	
\end{theorem}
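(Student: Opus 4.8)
The plan is to compute the limit by a first-order Taylor expansion of $f$ and $\Phi$ along the segment joining the optimal pair to the perturbed pair, and then to pass to the limit using the convergence $\eta^{\varepsilon}\to 0$ from Proposition \ref{estimate} together with the $L^2$-estimate \eqref{e:L2bound-y}. First I would split the increment of the cost as
\[
\frac{J(u^\varepsilon)-J(\overline u)}{\varepsilon}=\E\int_0^T\frac{f\big(t,x^\varepsilon(t),u^\varepsilon(t)\big)-f\big(t,\overline x(t),\overline u(t)\big)}{\varepsilon}\,dt+\E\,\frac{\Phi\big(x^\varepsilon(T)\big)-\Phi\big(\overline x(T)\big)}{\varepsilon}.
\]
Writing each difference as $\int_0^1\frac{d}{d\lambda}(\cdots)\,d\lambda$ with the notation $x_{\lambda,\varepsilon},u_{\lambda,\varepsilon}$ of \eqref{sh-notation}, and using the identities $x^\varepsilon-\overline x=\varepsilon(\widehat x+\eta^\varepsilon)$ and $u^\varepsilon-\overline u=\varepsilon(u-\overline u)$, the running-cost term becomes
\[
\E\int_0^T\int_0^1\Big[f_x\big(t,x_{\lambda,\varepsilon},u_{\lambda,\varepsilon}\big)(\widehat x+\eta^\varepsilon)+f_u\big(t,x_{\lambda,\varepsilon},u_{\lambda,\varepsilon}\big)(u-\overline u)\Big]\,d\lambda\,dt,
\]
and similarly the terminal term becomes $\E\int_0^1\Phi_x\big(\overline x(T)+\lambda(x^\varepsilon(T)-\overline x(T))\big)(\widehat x(T)+\eta^\varepsilon(T))\,d\lambda$.

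Next I would dispose of the $\eta^\varepsilon$ contributions. Using the linear-growth bound in (H1), one has $|f_x(t,x_{\lambda,\varepsilon},u_{\lambda,\varepsilon})|^2\le C(1+|x_{\lambda,\varepsilon}|^2+|u_{\lambda,\varepsilon}|^2)$, and since $|x_{\lambda,\varepsilon}|^2\le 2|\overline x|^2+2|y^\varepsilon|^2$ with $\E|y^\varepsilon(t)|^2\le C\varepsilon^2$ by \eqref{e:L2bound-y}, the quantities $\E\int_0^T\int_0^1|f_x(t,x_{\lambda,\varepsilon},u_{\lambda,\varepsilon})|^2\,d\lambda\,dt$ (and the analogous $\Phi_x$ expression at $t=T$) are bounded uniformly in $\varepsilon$. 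Pairing these by Cauchy--Schwarz with $\E\int_0^T|\eta^\varepsilon(t)|^2\,dt\le T\sup_{t}\E|\eta^\varepsilon(t)|^2$, which tends to $0$ by \eqref{e:etae0}, shows that every term containing $\eta^\varepsilon$ vanishes as $\varepsilon\to0$.

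It then remains to replace $f_x(t,x_{\lambda,\varepsilon},u_{\lambda,\varepsilon})$ by $f_x(t)$ in the surviving terms, and likewise for $f_u$ and $\Phi_x$; once $f_x(t,x_{\lambda,\varepsilon},u_{\lambda,\varepsilon})\to f_x(t)$ is established in $L^2(\Omega\times[0,T]\times[0,1])$, a final Cauchy--Schwarz pairing against the fixed $L^2$ functions $\widehat x$ and $u-\overline u$ delivers the claimed formula. \emph{Here is where the main obstacle lies}: the intermediate point $x_{\lambda,\varepsilon}$ itself varies with $\varepsilon$, so no fixed integrable dominating function is available and a naive dominated-convergence argument is unavailable. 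I would overcome this by observing that $x_{\lambda,\varepsilon}\to\overline x$ and $u_{\lambda,\varepsilon}\to\overline u$ in $L^2$ uniformly in $\lambda$ (by \eqref{e:L2bound-y} and the definition of $u_{\lambda,\varepsilon}$), hence in measure; the continuity of $f_x$ then yields convergence in measure of $f_x(t,x_{\lambda,\varepsilon},u_{\lambda,\varepsilon})$ to $f_x(t)$, while the $L^1$-convergence of $|x_{\lambda,\varepsilon}|^2$ and $|u_{\lambda,\varepsilon}|^2$ combined with the linear-growth bound in (H1) gives uniform integrability of $|f_x(t,x_{\lambda,\varepsilon},u_{\lambda,\varepsilon})|^2$. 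A Vitali-type argument upgrades convergence in measure to the required $L^2$-convergence. The terminal term is treated identically, using $\eta^\varepsilon(T)\to0$ and the continuity and growth of $\Phi_x$, and assembling the three limits produces the stated derivative.
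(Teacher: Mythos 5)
Your proof follows essentially the same route as the paper's: the identical Taylor-expansion decomposition of the difference quotient into terms paired with $\eta^{\varepsilon}$ (killed by Proposition \ref{estimate}) and difference terms $f_x\big(t,x_{\lambda,\varepsilon}(t),u_{\lambda,\varepsilon}(t)\big)-f_x(t)$ (and likewise for $f_u$, $\Phi_x$) paired with the fixed $L^2$ functions $\widehat{x}$ and $u-\overline{u}$, followed by Cauchy--Schwarz. The only divergence is at the last step, where the paper simply invokes the dominated convergence theorem while you justify the needed $L^2$-convergence via convergence in measure plus uniform integrability (Vitali); this is a more careful treatment of the very point where the paper's wording is loose, since the would-be dominating function varies with $\varepsilon$.
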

\begin{proof}
	Denote
	\begin{align*}
	H_\e&=
	\frac{1}{\varepsilon}\left(\int_0^T\left[f\big(t,x^{\varepsilon}(t),u^{\varepsilon}(t)\big)-f(t)\right]dt+\Phi\big(x^{\varepsilon}(T)\big)-\Phi\big(\overline x(T)\big)\right)\\&\hspace{1em}-\left(\int_0^T\left[f_x(t)\widehat{x}(t)+f_u(t)\big(u(t)-\overline{u}(t)\big)\right]dt+\Phi _x\big(\overline{x}(T)\big) \widehat{x}(T)\right).
	\end{align*}
	Then, to prove the desired result, it suffices to show 
	$\lim\limits_{\varepsilon \rightarrow 0}\E\big[\left\vert H_\e\right\vert\big]=0$. 
	
		From the Taylor expansion, we have, recalling the definition \eqref{e:eta-e} of $\eta^\e(t)$ and using the abbreviated notations (\ref{sh-notation}) in the last Proposition, 
	
\begin{align*}
H_\e&=\left(\int_0^1\Phi_x\big(x_{\lambda,\varepsilon}(T)\big)d\lambda\right)\eta^{\varepsilon}(T)+\left(\int_0^1\left[\Phi_x\big(x_{\lambda,\varepsilon}(T)\big)-\Phi_x\big(\overline{x}(T)\big)\right]d\lambda\right)\widehat{x}(T)\\&\quad+\int_0^T\left( \int_0^1f_x\big(t,x_{\lambda,\varepsilon}(t), u_{\lambda,\varepsilon}(t)\big)d\lambda\right)\eta^{\varepsilon}(t)dt\\
&\quad+\int_0^T\left(\int_0^1\left[f_x\big(t,x_{\lambda,\varepsilon}(t), u_{\lambda,\varepsilon}(t)\big)-f_x(t)\right]d\lambda\right)\widehat{x}(t)dt\\&\quad+\int_0^T\left(\int_0^1\left[f_u\big(t,x_{\lambda,\varepsilon}(t),u_{\lambda,\varepsilon}(t)\big)-f_u(t)\right]d\lambda\right)\big(u(t)-\overline{u}(t)\big)dt.
	\end{align*}
	Then, the H\"older inequality implies 
	\begin{align*}
	\E\big[\left\vert H_\e\right\vert\big] & \leq \left(\E\left\vert\int_0^1\Phi_x\big(x_{\lambda,\varepsilon}(T)\big)d\lambda\right\vert^2\right)^\frac{1}{2}\left(\E\left\vert\eta^{\varepsilon}(T)\right\vert^2\right)^\frac{1}{2}\\
	&\quad+\left(\E\left\vert \int_0^1\left[\Phi_x\big(x_{\lambda,\varepsilon}(T)\big)-\Phi_x\big(\overline{x}(T)\big)\right]d\lambda\right\vert^2\right)^\frac{1}{2}\left(\E\left\vert\widehat{x}(T)\right\vert^2\right)^\frac{1}{2}\\
	&\quad+\int_0^T\left(\E\left\vert\int_0^1f_x\big(t,x_{\lambda,\varepsilon}(t),u_{\lambda,\varepsilon}(t)\big)d\lambda\right\vert^2\right)^\frac{1}{2}\left(\E\left\vert\eta^{\varepsilon}(t)\right\vert^2\right)^\frac{1}{2}dt\\
	&\quad+\int_0^T\left(\E\left\vert\int_0^1\left[f_x\big(t,x_{\lambda,\varepsilon}(t),u_{\lambda,\varepsilon}(t)\big)-f_x(t)\right]d\lambda\right\vert^2\right)^\frac{1}{2}\left(\E\left\vert\widehat{x}(t)\right\vert^2\right)^\frac{1}{2}dt\\
	&\quad+\int_0^T\left(\E\left\vert\int_0^1\left[f_u\big(t,x_{\lambda,\varepsilon}(t),u_{\lambda,\varepsilon}(t)\big)-f_u(t)\right]d\lambda\right\vert^2\right)^\frac{1}{2}\left(\E\left\vert u(t)-\overline{u}(t)\right\vert^2\right)^\frac{1}{2}dt.
	\end{align*}
Noting Proposition {\ref{estimate}} and that the functions $\Phi_x, f_x$ and  $f_u$ are continuous and satisfy the linear  growth condition,  	we can conclude 	that $\lim\limits_{\varepsilon\rightarrow 0}\E\big[\left\vert H_\e\right\vert\big]=0$ by the dominated convergence theorem.
\end{proof}

\subsection{Maximum principle} \label{sec:mp}

Denote $q(t,x,u, y,v):=(q_{ij}(x,u,y,v))_{d\times d}$ where $q_{ij}$ is given by \eqref{e:jqv'}. Thus we have $q(t,x,u, x',u')=q^*(t,x',u', x,u)$.  
Throughout the rest of this article, we consider both $q:=q(t, x,u, y,v)$ and $q^*:=q^*(t, x,u, y,v)$ as functions of $(t, x,u, y,v)$, and we shall use $\frac{\partial }{\partial x}, \frac{\partial }{\partial u}, \frac{\partial }{\partial y}$ and $\frac{\partial }{\partial v}$ to denote the partial derivatives with respect to $x,u,y$ and $v$, respectively.  Clearly, at any point $p_0=(t_0, x_0, u_0, x_0, u_0)$, we have 
\begin{equation}\label{e:q=q*}
\frac{\partial}{\partial x} q^*(p_0) =\frac{\partial }{\partial y} q(p_0), ~~ \frac{\partial}{\partial u} q^*(p_0) =\frac{\partial }{\partial v} q(p_0).
\end{equation}

Now we consider the adjoint equation,	i.e.,  the following BSDE   
\begin{equation}\label{adjoint}
\begin{cases}
dy(t) =  -\Big(b^*_{x}\left( t\right) y(t)+\left(\frac{\partial}{\partial x}\tr\left[z(t) q^*\big(t,\overline{x}(t),\overline{u}(t),\overline{x}(t),\overline{u}(t)\big)\right]\right)^*+f^*_x(t)\Big)dt\\\qquad \qquad \quad +z(t)dM(t)+dN(t), \\ 
y(T) =  \Phi^*_x\big(\overline{x}(T)\big).
\end{cases}
\end{equation}
	Note that as mentioned in Section \ref{sec:optimal control problem},  $dM(t)=M\big(dt,\overline{x}(t),\overline{u}(t)\big)$ and $(\overline x,\overline u)\in \R^{d+k}$ is an optimal pair for the control problem. 

Denote
\begin{align*}
\mathcal{M}^2\big([0,T];\mathbb{R}^d\big):=\Bigg\{\phi:[0,T]\times\Omega\rightarrow\mathbb{R}^d; \,\,\phi\text{ is predictable with } \E\int_0^T \left\vert\phi(t)\right\vert^2dt<\infty\Bigg\},
\end{align*}
and 
\begin{align*}
	\mathcal{Q}^2\big([0,T];\mathbb{R}^{d\times d}\big):=&\Bigg\{\phi:[0,T]\times\Omega\rightarrow\mathbb{R}^{d\times d};\,\,  \phi\text{ is predictable with }  \\
	&\qquad\qquad  \E\int_0^T \tr\Big[\phi(t)q\big(t,\overline x(t),\overline u(t),\overline x(t),\overline u(t)\big)\phi^*(t) \Big]dt<\infty\Bigg\}.
\end{align*}

	Then,
according to \cite{El94}, there exists a unique  triple of stochastic processes  \[\left(y,z,N\right)\in \mathcal{M}^2\big([0,T];\mathbb{R}^d\big)\times\mathcal{Q}^2\big([0,T];\mathbb{R}^{d\times d}\big)\times\mathcal L^2\] satisfying (\ref{adjoint}),  where 
 $\mathcal L^2$ is the space 	containing all square integrable martingales. Here, $N$ is a $\mathbb{R}^d$-valued square integrable martingale orthogonal to  $M$, i.e., for $1\leq i,j \leq d$,  \[\la N^i, \int_0^\cdot M^{j}\big(ds,\overline{x}(s),\overline{u}(s)\big)\ra_t=0, ~~\forall t\in[0,T].\]

\begin{lemma}\label{variational inequality}
	Let $\big(y,z, N\big)$ be the adapted solution of \eqref{adjoint}. Then	
	\begin{align*}
	&\E\la y(T),\widehat{x}(T)\ra\\
=&\E\int_{0}^{T}\left[\la
	b^*_{u}(t)y(t)+\left(\frac{\partial }{\partial u} \tr \left[ z(t) q^*\big(t,\overline{x}(t),\overline{u}(t),\overline{x}(t),\overline{u}(t)\big)\right]\right)^*,u(t)-\overline{u}(t)\ra-\la f_x^*(t),\widehat{x}(t)\ra \right]dt.
	\end{align*}
	
\end{lemma}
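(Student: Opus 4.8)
The plan is to apply the It\^o product rule to the scalar process $\la y(t),\widehat{x}(t)\ra=\sum_{i=1}^d y_i(t)\widehat{x}_i(t)$, integrate over $[0,T]$, and take expectations, using $\widehat{x}(0)=0$ and $y(T)=\Phi_x^*(\overline{x}(T))$. Writing the finite-variation parts of the adjoint equation \eqref{adjoint} and the variational equation \eqref{variational equation} into
\[d\la y,\widehat{x}\ra=\la \widehat{x},dy\ra+\la y,d\widehat{x}\ra+\sum_{i=1}^d d\la y_i,\widehat{x}_i\ra_t,\]
the two drift contributions involving $b_x$ cancel because $\la \widehat{x},b_x^*y\ra=\la y,b_x\widehat{x}\ra$. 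What survives from the drifts is
\[\Big[-\la \widehat{x},(\tfrac{\partial}{\partial x}\tr[zq^*])^*\ra-\la f_x^*,\widehat{x}\ra+\la b_u^*y,u-\overline{u}\ra\Big]\,dt,\]
so the whole statement reduces to identifying the quadratic covariation $\sum_i d\la y_i,\widehat{x}_i\ra_t$.

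The heart of the argument is this covariation. The martingale part of $y_i$ is $\sum_m z_{im}dM^m+dN^i$, and that of $\widehat{x}_i$ is $\sum_j\widehat{x}_j\,\partial_{x_j}M^i(dt)+\sum_l(u_l-\overline{u}_l)\,\partial_{u_l}M^i(dt)$. Since $N$ is orthogonal to $M$ and to its spatial and control derivatives along the optimal path, the $dN^i$ contribution drops out; this orthogonality is the single delicate point and is exactly what the extra martingale $N$ in \eqref{adjoint} is there to provide. For the surviving terms I would invoke Theorem \ref{interchange} to exchange differentiation and covariation, obtaining
\[d\la M^m(\cdot,\overline{x},\overline{u}),\partial_{x_j}M^i(\cdot,\overline{x},\overline{u})\ra_t=\tfrac{\partial}{\partial y_j}q_{mi}\big(t,\overline{x},\overline{u},\overline{x},\overline{u}\big)\,dt,\]
and similarly for $\partial_{u_l}$ producing $\tfrac{\partial}{\partial v_l}q_{mi}$. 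Summing against $z_{im}$ and using $\tr[zq^*]=\sum_{i,m}z_{im}q_{im}$ together with the symmetry \eqref{e:q=q*} (which, on the diagonal, turns $\tfrac{\partial}{\partial y}q_{mi}$ into $\tfrac{\partial}{\partial x}q_{im}$, and $\tfrac{\partial}{\partial v}q_{mi}$ into $\tfrac{\partial}{\partial u}q_{im}$) gives
\[\sum_{i}d\la y_i,\widehat{x}_i\ra_t=\Big[\la \widehat{x},(\tfrac{\partial}{\partial x}\tr[zq^*])^*\ra+\la(\tfrac{\partial}{\partial u}\tr[zq^*])^*,u-\overline{u}\ra\Big]\,dt.\]

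Combining this with the surviving drift, the two $\tfrac{\partial}{\partial x}\tr[zq^*]$ terms cancel and what remains is precisely the integrand $\la b_u^*y+(\tfrac{\partial}{\partial u}\tr[zq^*])^*,u-\overline{u}\ra-\la f_x^*,\widehat{x}\ra$ of the statement. It then remains to justify taking expectations: the stochastic integrals against $z\,dM$, $dN$, and $\partial_xM\,\widehat{x}+\partial_uM(u-\overline{u})$ are a priori only local martingales, so I would localize by stopping times and pass to the limit, using $\E\sup_t|\widehat{x}(t)|^2<\infty$, $y\in\mathcal{M}^2$, $z\in\mathcal{Q}^2$, and the uniform boundedness of the second derivatives of $q$ from (H2) to guarantee uniform integrability. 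The main obstacle I anticipate is the index bookkeeping in the covariation step, namely tracking which slot of $q_{mi}$ each derivative acts on so that \eqref{e:q=q*} correctly matches the adjoint drift term against the $\partial_x$ part of the covariation while retaining the $\partial_u$ part; the orthogonality of $N$ to $\partial_xM$ and $\partial_uM$ along the path is the only conceptual subtlety, and I would verify that the construction of $N$ in \eqref{adjoint} indeed secures it.
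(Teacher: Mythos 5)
Your proposal is correct and follows essentially the same route as the paper's proof: Itô's product rule on $\la y(t),\widehat{x}(t)\ra$, cancellation of the $b_x$ terms, identification of the quadratic covariation via Theorem \ref{interchange} together with the diagonal symmetry \eqref{e:q=q*} (which cancels the $\frac{\partial}{\partial x}\tr[zq^*]$ drift and leaves the $\frac{\partial}{\partial u}$ term), and the orthogonality of $N$ to kill the remaining covariation. The two points you flag for extra care — localization before taking expectations, and verifying that $N$ is orthogonal not just to $M(\cdot)=\int_0^\cdot M(ds,\overline{x}(s),\overline{u}(s))$ but to the derivative martingales $\int_0^\cdot\partial_x M(ds)$ and $\int_0^\cdot\partial_u M(ds)$ — are simply asserted in the paper, so your treatment is, if anything, more careful on the same path.
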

\begin{proof}
	Applying It\^o formula to $\la y(t),\widehat{x}(t)\ra$,  we have
	\begin{align*}
	&\E\la y(T),\widehat{x}(T)\ra  \\
	=&\E\int_{0}^{T}\la dy(t),\widehat{x}(t)\ra +\la y(t),d\widehat{x}(t)\ra+d\la y,\widehat{x}\ra_t \\
	=&\E\Bigg[-\int_{0}^{T}\
	\la b^*_{x}(t) y(t)+\left(\frac{\partial}{\partial x}\tr\left[z(t) q^*\big(t,\overline{x}(t),\overline{u}(t),\overline{x}(t),\overline{u}(t)\big)\right]\right)^*+f^*_x(t) ,\widehat{x}(t)\ra dt\\
	&\hspace{3em}+\la y(t),b_{x}(t) \widehat{x}(t)+b_{u}(t) \big(u(t)-\overline{u}(t)\big) \ra dt\\
	&\hspace{3em}+ d\la \int_0^\cdot z(s)dM(s)+N(\cdot),\int_0^\cdot \partial_{x}M(ds)\widehat{x}(s)+\int_0^\cdot \partial _{u}M(ds)\big(u(s)-\overline{u}(s)\big) \ra_t\Bigg],
	\end{align*}
	where we use the notation, for $d$-dimensional local martingales $M=(M^1, \dots, M^d)$ and $N=(N^1, \dots, N^d)$,  \[\la (M^1, \dots, M^d), (N^1,\dots, N^d)\ra_t:=\sum_{j=1}^d \la M^j, N^j\ra_t. \]
	 Note that  from Theorem \ref{interchange} and \eqref{e:q=q*}, 	it follows that
	  \begin{align*}
	  &d \la \int_0^\cdot z(s)M(ds), \int_0^\cdot \partial_xM(ds) \widehat{x}(s) \ra_t\\
	  =&\la\left(\frac{\partial}{\partial y}\tr\left[z(t) 
	 q\big(t,\overline{x}(t),\overline{u}(t),\overline{x}(t),\overline{u}(t)
	 \big)\right]\right)^*,\widehat{x}(t)\ra dt\\
	  =&\la\left(\frac{\partial}{\partial x}\tr\left[z(t) 
	 q^*\big(t,\overline{x}(t),\overline{u}(t),\overline{x}(t),\overline{u}(t)
	 \big)\right]\right)^*,\widehat{x}(t)\ra dt
	 \end{align*}
	 and 
	 \begin{align*}
	 &d \la \int_0^\cdot z(s)M(ds), \int_0^\cdot \partial_uM(ds) \big(u(s)-\overline{u}(s)\big)\ra_t\\
	 =&
	\la \left(\frac{\partial}{\partial v}\tr\left[z(t) q\big(t,\overline{x}(t),\overline{u}(t),\overline{x}(t),\overline{u}(t)\big)\right]\right)^*, u(t)-\overline{u}(t)\ra dt\\
	=&
	\la \left(\frac{\partial}{\partial u}\tr\left[z(t) q^*\big(t,\overline{x}(t),\overline{u}(t),\overline{x}(t),\overline{u}(t)\big)\right]\right)^*, u(t)-\overline{u}(t)\ra dt. 
	\end{align*}
	From the orthogonality of $M$ and $N$, we also have   \[d\la N,\int_0^\cdot \partial_{x}M(ds)\widehat{x}(s)+\int_0^\cdot \partial _{u}M(ds)\big(u(s)-\overline{u}(s)\big)\ra_t =0.\] 
	 
	By combining the above equalities, the desired result can be obtained.
\end{proof}
Now, 	from Theorem \ref{derivatives}, the adjoint equation \eqref{adjoint} and Lemma \ref{variational inequality}, we have 
\begin{align*}
&\lim_{\varepsilon \rightarrow 0}\frac{J(u^{\varepsilon })-J(\overline{u})}{\varepsilon }\\ =&\E\int_{0}^{T}\la
b^*_{u}(t)y(t)+\left(\frac{\partial }{\partial u}\tr\left[ z(t) q^*\big(t,\overline{x}(t),\overline{u}(t),\overline{x}(t),\overline{u}(t)\big)\right]\right)^*+f^*_u(t),u(t)-\overline{u}(t)\ra dt.
\end{align*}                       
Since $\overline{u}$ is an optimal control at which $J(u)$ is minimized, we have for almost all $t\in[0,T]$,
\begin{equation}\label{e:variational}
\la
b^*_{u}(t)y(t)+\left(\frac{\partial }{\partial u}\tr\left[ z(t) q^*\big(t,\overline{x}(t),\overline{u}(t),\overline{x}(t),\overline{u}(t)\big)\right]\right)^*+f^*_u(t),u(t)-\overline{u}(t)\ra \geq 0\quad a.s. 
\end{equation}

We now state our maximum principle in the following theorem, where 	we use the Hamiltonian 	defined as follows:
	\begin{equation}\label{e:hamiltonian}
		H(t,x,u,y,z)=\la y(t),b(t,x,u)\ra +\tr[z(t) q^*(t,\overline x,\overline u,x,u)]+f(t,x,u).
	\end{equation} 
\begin{theorem}\label{mp}
	Assume conditions  (H1)-(H2). Let $\overline{u}$ be an optimal control associated with the stochastic control problem \eqref{state equation}--\eqref{scp} and $(\overline{x},\overline{u})$ be the optimal pair. 	Then, there exists $(y,z)\in\mathcal{M}^2([0,T];\mathbb{R}^d)\times\mathcal{Q}^2([0,T];\mathbb{R}^{d\times d})$ satisfying the adjoint equation \eqref{adjoint} such that for all $u\in \mathbf U[0,T]$,
	\begin{equation}\label{ine}
 H_u\big(t,\overline{x}(t),\overline{u}(t),y(t),z(t)\big)\big(u(t)-\overline{u}(t)\big) \geq 0 \quad a.s.
	\end{equation}
 for almost all $t\in[0,T]$, 	here $H$ is given by \eqref{e:hamiltonian} and $H_u:=\frac{\partial}{\partial u}H$.
\end{theorem}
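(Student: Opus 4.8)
The plan is to treat Theorem \ref{mp} as the assembly and pointwise localization of the first-order variational analysis already prepared in Theorem \ref{derivatives} and Lemma \ref{variational inequality}. First I would record the first-order optimality condition. Since the control domain $U$ is convex, for every $u\in\mathbf U[0,T]$ and $\varepsilon\in(0,1]$ the perturbation $u^{\varepsilon}=\overline u+\varepsilon(u-\overline u)$ is admissible, and optimality of $\overline u$ gives $J(u^{\varepsilon})-J(\overline u)\ge 0$. Dividing by $\varepsilon>0$ and letting $\varepsilon\downarrow 0$, Theorem \ref{derivatives} yields $\E\big[\int_0^T(f_x(t)\widehat x(t)+f_u(t)(u(t)-\overline u(t)))\,dt+\Phi_x(\overline x(T))\widehat x(T)\big]\ge 0$ for every $u\in\mathbf U[0,T]$, where $\widehat x$ is the solution of the variational equation \eqref{variational equation} associated with this $u$.

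Next I would eliminate the implicit dependence on $\widehat x$ through the adjoint equation. Because $y(T)=\Phi_x^*(\overline x(T))$, the terminal term equals $\E\la y(T),\widehat x(T)\ra$, to which Lemma \ref{variational inequality} applies. Substituting the identity of that lemma, the two occurrences of $\la f_x^*(t),\widehat x(t)\ra=f_x(t)\widehat x(t)$ cancel, and one is left with the clean integral inequality $\E\int_0^T\la\Lambda(t),u(t)-\overline u(t)\ra\,dt\ge0$ for all $u\in\mathbf U[0,T]$, where $\Lambda(t)=b_u^*(t)y(t)+\big(\frac{\partial}{\partial u}\tr[z(t)q^*(t,\overline x(t),\overline u(t),\overline x(t),\overline u(t))]\big)^*+f_u^*(t)$. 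This is precisely the integrated form of \eqref{e:variational}.

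The substantive step is to pass from this integral inequality, which holds for all admissible $u$, to the pointwise statement. I would test against the admissible controls $u(t,\omega)=\overline u(t,\omega)+\mathbf 1_A(t,\omega)\,(v-\overline u(t,\omega))$, where $v\in U$ is fixed and $A\subset[0,T]\times\Omega$ is an arbitrary predictable set (this $u$ is admissible since its values lie in $U$ and it is square integrable). For such $u$ the inequality reads $\E\int_0^T\mathbf 1_A(t)\la\Lambda(t),v-\overline u(t)\ra\,dt\ge0$; since $A$ is arbitrary, $\la\Lambda(t),v-\overline u(t)\ra\ge0$ for $dt\times d\mathbf P$-a.e. $(t,\omega)$. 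Choosing a countable dense subset of $U$ and using the continuity of $v\mapsto\la\Lambda(t),v-\overline u(t)\ra$, this holds simultaneously for all $v\in U$ on a set of full measure; taking $v=u(t,\omega)$ for an arbitrary admissible $u$ then gives the pointwise inequality for every $u\in\mathbf U[0,T]$.

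Finally I would identify $\Lambda(t)$ with $\big(H_u(t,\overline x(t),\overline u(t),y(t),z(t))\big)^*$ by differentiating the Hamiltonian \eqref{e:hamiltonian} term by term and invoking the diagonal symmetry relations \eqref{e:q=q*} to match the $\tr[zq^*]$ derivative; this converts $\la\Lambda,u-\overline u\ra\ge0$ into \eqref{ine}. The hard part will be the localization step together with this last identification: the measurability bookkeeping needed to justify the indicator-control test and the dense-subset upgrade must be done with care, and matching the quadratic-variation term of $H_u$ to the coefficient produced by Lemma \ref{variational inequality} requires tracking the $\partial/\partial u$ versus $\partial/\partial v$ conventions for $q$ and $q^*$ at the diagonal point $(t,\overline x,\overline u,\overline x,\overline u)$ — the one place where the spatial-parameter structure of the driving martingale genuinely enters.
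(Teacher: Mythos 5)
Your proposal is correct and follows essentially the same route as the paper: pass to the Gateaux derivative via Theorem \ref{derivatives}, substitute the duality identity of Lemma \ref{variational inequality} so the $f_x\widehat{x}$ terms cancel, conclude the integrated inequality from optimality, localize to a pointwise statement, and rewrite the coefficient as $H_u$ via \eqref{e:q=q*}. The only difference is that you spell out the indicator-control localization argument, which the paper simply asserts when passing from the integral inequality to \eqref{e:variational}; that added detail is a strengthening, not a departure.
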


\begin{remark} We would like to remind the reader that the proof of Theorem \ref{mp} 	heavily relies  on the convexity of  the control domain $U$. For control problems with a general control domain which is not necessarily convex, we suspect that one may still obtain a global maximum principle as in Peng \cite{Global} by applying the second-order variational method developed therein.    Noting that in our setting the noise $M$ is a local martingale with a spatial parameter rather than Brownian motion, we expect that  extra difficulties would come from deriving relevant estimations and adjoint equations as well. This is a subsequent project that we plan to work on in future.
	\end{remark}

\begin{remark}\label{remark:3-2}
If the control domain $U$ is the whole space $\R^k$,  let $\widetilde{u}(t)=-u(t)+2\overline{u}(t)$ for $t\in [0,T]$, and 	then, $\widetilde u\in \mathbf U[0,T]=\mathcal{M}^2([0,T];\mathbb{R}^k)$.  	Now, Theorem \ref{mp} yields
	\begin{equation*}
	H_u\big(t,\overline{x}(t),\overline{u}(t),y(t),z(t)\big)\big(\widetilde u(t)-\overline{u}(t)\big)\geq 0 \quad a.s.,
	\end{equation*}
		i.e.,
	\begin{equation*}
	H_u\big(t,\overline{x}(t),\overline{u}(t),y(t),z(t)\big)\big(u(t)-\overline{u}(t)\big)\leq 0 \quad a.s.
	\end{equation*}
	This implies 
	\[H_u\big(t,\overline{x}(t),\overline{u}(t),y(t),z(t)\big)=0\quad a.s.\]
\end{remark}

\begin{remark}\label{classical2}
If we assume
	\begin{equation}\label{rel} 		 M(t,x,u)=\int_0^t\sigma(s,x,u)dW_s,
	\end{equation}
	similar to Remark \ref{class1}, the joint quadratic variation of $M(\cdot,x,u)$ and $M(\cdot ,y,v)$ is given by
	\[q(t,x,u,y,v)=\sigma(t,x,u)\sigma^*(t,y,v),\]
	and, the controlled system \eqref{state equation} is reduced to  the classical one:
	\begin{equation}\label{classical-2}
		x^u(t)=x^u_0+\int_0^tb\big(s,x^u(s),u(s)\big)ds+ \int_0^t \sigma\big(s ,x^u(s),u(s)\big)dW_s,
	\end{equation}
	The adjoint equation \eqref{adjoint}  becomes 
	\begin{equation}\label{e:adjoint'}
		\begin{cases}
			dy(t)= & -\Big(b^*_{x}\left( t\right) y(t)+\left(\frac{\partial}{\partial x}\tr\left[z(t)\sigma \big(t,\overline{x}(t),\overline{u}(t)\big)\sigma^*\big(t,\overline{x}(t),\overline{u}(t)\big)\right]\right)^*+f^*_x(t)\Big)dt\\&+z(t)\sigma \big(t,\overline{x}(t),\overline{u}(t)\big)dW_t+ dN(t), \\ 
			y(T)= & \Phi^*_x\big(\overline{x}(T)\big),
		\end{cases}
	\end{equation}
	where 
	\begin{align*}&\frac{\partial }{\partial x} \tr\left[z(t)\sigma \big(t,\overline{x}(t),\overline{u}(t)\big)\sigma^*\big(t,\overline{x}(t),\overline{u}(t)\big)\right]\\
	:= &\frac{\partial }{\partial x} \tr\left[z(t)\sigma \big(t,y,v\big)\sigma^*\big(t,x,u\big)\right]\Big|_{(x,u, y,v)=(\overline{x}(t),\overline{u}(t),\overline{x}(t), \overline{u}(t))}\,.
	\end{align*}
If we assume 	that the filtration is generated by the Brownian motion $W$, then a mean-zero local martingale $N$ is orthogonal to $W$ if and only if $N\equiv0$. Denoting $\tilde{z}(t)=z(t)\sigma\big(t,\overline x(t),\overline u(t)\big)$, the adjoint equation \eqref{e:adjoint'} can be written as 
	\begin{align*}
		\left\{ 
		\begin{array}{rl}
			dy(t)= & -\Big(b^*_{x}\left( t\right) y(t)+\left(\frac{\partial}{\partial x}\tr\left[\tilde z(t)\sigma^*\big(t,\overline{x}(t),\overline{u}(t)\big)\right]\right)^*+f^*_x(t)\Big)dt+\tilde z(t)dW_t, \\ 
			y(T)= & \Phi^{*}_x\big(\overline{x}(T)\big),
		\end{array}
		\right. 
	\end{align*}
	and the variational inequality \eqref{e:variational} becomes
	\begin{equation}\label{va-c}
	\la
	b^*_{u}(t)y(t)+\left(\frac{\partial }{\partial u}\tr\left[ \tilde  z(t)\sigma^* \big(t,\overline{x}(t),\overline{u}(t) \big)\right]\right)^*+f^*_u(t),u(t)-\overline{u}(t)\ra\geq 0\quad a.s.
	\end{equation} 
from which  the classical maximum principle can be 	obtained.
\end{remark}

	\subsection{Sufficiency of 	the maximum principle}
	
	In this subsection, we show that the necessary condition \eqref{ine} for an optimal control pair obtained in Theorem \ref{mp}  is also sufficient under proper conditions. 
	\begin{theorem}
		Suppose (H1)-(H2) hold. Let $\overline u\in \mathbf U[0,T] $ satisfy that,  for all $u\in \mathbf U[0,T]$,
		\begin{equation}
			H_u\big(t,\overline{x}(t),\overline{u}(t),y(t),z(t)\big)\big(u(t)-\overline{u}(t)\big)\geq 0 \quad a.s., 
		\end{equation}
 for almost all $t\in[0,T]$, where $H$ is given in \eqref{e:hamiltonian}. We further assume that $H$ is convex with respect to $x$ and $u$ and that $\Phi$ is convex with respect to $x$. Then $\overline u$ is an optimal control for \eqref{state equation}--\eqref{scp}.
		
	\end{theorem}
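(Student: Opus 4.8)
The plan is to run the standard convex-duality (verification) argument, adapted to the present martingale-driven system. Fix an arbitrary admissible control $u\in\mathbf U[0,T]$ with associated state $x^u$, set $\delta x(t)=x^u(t)-\overline x(t)$, and aim to show $J(u)-J(\overline u)\ge 0$. Splitting the cost functional gives $J(u)-J(\overline u)=\E\int_0^T[f(t,x^u,u)-f(t,\overline x,\overline u)]\,dt+\E[\Phi(x^u(T))-\Phi(\overline x(T))]$. Since $\Phi$ is convex in $x$ and $y(T)=\Phi_x^*(\overline x(T))$ by the terminal condition in \eqref{adjoint}, the terminal term is bounded below: $\E[\Phi(x^u(T))-\Phi(\overline x(T))]\ge\E\la y(T),\delta x(T)\ra$.

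The core computation is to evaluate $\E\la y(T),\delta x(T)\ra$ by applying It\^o's product rule to $\la y(t),\delta x(t)\ra$ on $[0,T]$, noting $\delta x(0)=0$. Here I would use the adjoint dynamics \eqref{adjoint}, whose drift equals $-H_x^*(t,\overline x,\overline u,y,z)$, together with the dynamics of $x^u$ and $\overline x$. Taking expectations removes the genuine stochastic-integral terms (justified exactly as in the derivation of \eqref{e:L2bound-y} via the Burkholder--Davis--Gundy inequality and the integrability supplied by (H2)), leaving the drift contribution and the quadratic-covariation contribution. The covariation between the martingale part $\int z\,dM$ of $y$ and the martingale part $\int M(ds,x^u,u)-\int M(ds,\overline x,\overline u)$ of $\delta x$ is evaluated through Theorem \ref{interchange} and the local characteristic \eqref{e:jqv'}, while the bracket of the orthogonal martingale $N$ with the state martingale is discarded by orthogonality.

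Finally I would recombine. Rewriting $f(t,x^u,u)-f(t,\overline x,\overline u)$ through the Hamiltonian \eqref{e:hamiltonian} and inserting the It\^o expansion of $\E\la y(T),\delta x(T)\ra$, the $\la y,b\ra$-terms and the trace-terms cancel against the adjoint drift and covariation, so that
\[ J(u)-J(\overline u)\ge\E\int_0^T\big[H(t,x^u,u,y,z)-H(t,\overline x,\overline u,y,z)-H_x(t,\overline x,\overline u,y,z)\,\delta x(t)\big]\,dt. \]
Convexity of $H$ in $(x,u)$ bounds the integrand from below by $H_u(t,\overline x,\overline u,y,z)(u(t)-\overline u(t))$, which is nonnegative for a.e. $t$ almost surely by the hypothesis \eqref{ine}. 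Therefore $J(u)-J(\overline u)\ge 0$ for every $u\in\mathbf U[0,T]$, i.e. $\overline u$ is optimal.

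The step I expect to be the main obstacle is the bookkeeping of the quadratic-covariation terms and their matching with the Hamiltonian. The bracket $\la\int z\,dM,\delta x\ra$ produces a trace term $\tr[z\,q]$ in which the off-optimal parameter $(x^u,u)$ occupies one argument of $q$, whereas the Hamiltonian \eqref{e:hamiltonian} carries $\tr[z\,q^*]$; reconciling the two requires the symmetry $q(t,x,u,y,v)=q^*(t,y,v,x,u)$ and the identity \eqref{e:q=q*}, and away from the diagonal the distinction between $q$ and $q^*$ has to be tracked carefully. The second delicate point is to verify that the cross bracket $\la N,\int M(ds,x^u,u)\ra$ genuinely drops out, which relies on the orthogonality of $N$ to the martingales generated by $M$. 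Once these covariation identities are in place, the convexity inequalities and the vanishing of the martingale expectations are routine.
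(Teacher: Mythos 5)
Your proposal is correct and follows essentially the same route as the paper's own proof: bound the terminal term via convexity of $\Phi$, apply It\^o's formula to $\la y(t),x^u(t)-\overline x(t)\ra$ with the covariation evaluated through the local characteristic and the $N$-bracket killed by orthogonality, and then conclude via convexity of $H$ and the hypothesis \eqref{ine}. The delicate points you flag (the $q$ versus $q^*$ bookkeeping off the diagonal and the vanishing of the $N$ cross bracket) are exactly the ones the paper handles in its display \eqref{e:3-28}.
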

	\begin{proof}
	It suffices to show 	that $J(u)-J(\overline u)\geq 0$ holds for  all $u\in \mathbf U[0,T]$.		From the convexity of $\Phi$, we have
		\begin{align}
			&J(u)-J(\overline u)\notag\\
		=&\E\int_0^T\big[ f(t,x^u(t),u(t))-f(t,\overline x(t),\overline u(t))\big]dt+\E\big[\Phi(x^u(T)-\overline x(T))\big]\notag\\\geq& \E\int_0^T \big[f(t,x^u(t),u(t))-f(t,\overline x(t),\overline u(t))\big]dt+\E\big[\Phi_x(\overline x(T))\big(x^u(T)-\overline x(T)\big)\big].\label{e:J-J}
		\end{align}
		Applying It\^o's formula to $\la y(t),x^u(t)-\overline x(t)\ra$ and then taking expectation, we obtain
		\begin{align}			&\E\big[\la\Phi_x(\overline x(T)),x^u(T)-\overline x(T)\ra\big]\notag\\ 
			=& \E\int_0^T\la-b^*_{x}(t) y(t)-\left(\frac{\partial}{\partial x}\tr[z(t) q^*(t,\overline{x}(t),\overline{u}(t),\overline{x}(t),\overline{u}(t))]\right)^*-f^*_x(t),x^u(t)-\overline x(t)\ra dt\notag\\&+\E\int_0^T\la y(t),b(t,x^u(t),u(t))-b(t,\overline x(t),\overline u(t))\ra dt\notag\\&+\E \la \int_0^\cdot z(t)M(dt,\overline x(t),\overline u(t))+N(\cdot),\int_0^\cdot\Big( M(dt,x^u(t),u(t))-M(dt,\overline x(t),\overline u(t))\Big)\ra_T.\label{e:3-28}
		\end{align}
		Hence, 	by combining \eqref{e:J-J} and \eqref{e:3-28}, and  	using the expression \eqref{e:hamiltonian} of $H$, we have 
		\begin{align*}
			J(u)-J(\overline u)&\geq \E\int_0^T\Big\{ -H_x\big(t,\overline x(t),\overline u(t),y(t),z(t)\big)\big(x^u(t)-\overline x(t)\big)\\&\hspace{8em}+H\big(t,x^u(t),u(t),y(t),z(t)\big)-H\big(t,\overline x(t),\overline u(t),y(t),z(t)\big)\Big\}dt.		\end{align*}
		It follows from the convexity of $H$ that
		\begin{align*}
			&H\big(t,x^u(t),u(t),y(t),z(t)\big)-H\big(t,\overline x(t),\overline u(t),y(t),z(t)\big)\\&\hspace{1em}\geq H_x\big(t,\overline x(t),\overline u(t),y(t),z(t)\big)\big(x^u(t)-\overline x(t)\big)\\&\hspace{1.5em}+ H_u\big(t,\overline x(t),\overline u(t),y(t),z(t)\big)\big(u(t)-\overline u(t)\big).
		\end{align*}
		Therefore, 
		\begin{align*}
			J(u)-J(\overline u)&\geq E\int_0^T  H_u\big(t,\overline x(t),\overline u(t),y(t),z(t)\big)\big(u(t)-\overline u(t)\big) dt\geq0.
		\end{align*}
		where the last step  follows from (\ref{ine}). The proof is concluded. 
	\end{proof}

\section{A discussion on the stochastic LQ problem}\label{sec:LQ}
In this section, we discuss the stochastic linear quadratic problem (LQ problem) in our setting, where the controlled system (\ref{state equation}) is driven by a local martingale $M(t,x,u)$ 	with $(x,u)$ as  parameters. To make \eqref{state equation} ``linear'' in terms of $(x,u)$ in the martingale part, we impose the following condition on the local characteristic $q$ of $M$: for any $d\times d$ matrix $A$ and all $(x,u), (y,v)\in \R^{d+k}$,
\begin{align}\label{e:condition-q}
&\tr\big[A\big(q^*(t,x,u,y,v)-q^*(t,x,u,x,u)\big)\big]\notag\\
=&\la\left(\frac{\partial}{\partial x}\tr\big[Aq^*(t,x,u,x,u)\big]\right)^*,y-x\ra+\la\left(\frac{\partial}{\partial u}\tr\big[Aq^*(t,x,u,x,u)\big]\right)^*,v-u\ra.
\end{align}

	Now, we consider 	the following linear state equation,
\begin{equation}\label{state equation2}
\left\{ 
\begin{array}{rl}
dx^u(t)= & \left[A(t)x^u(t)+B(t)u(t)\right]dt+M\left(dt,
x^u(t),u(t)\right),  \\ 
x^u\left( 0\right) = & x^u_{0},
\end{array}
\right. 
\end{equation}
with the quadratic cost functional
\begin{equation}\label{scp'}
J(u) =\frac{1}{2}\E\left\{\int_0^T\Big[\la Q(t)x^u(t),x^u(t)\ra+\la R(t)u(t),u(t)\ra\Big] dt+\la Gx^u(T),x^u(T)\ra\right\}.
\end{equation}
	
Here, for $t\in[0,T]$, $A(t)$,	 and $B(t)$ are matrices  with appropriate dimensions,  $Q(t)$ and $G$ are symmetric nonnegative definite matrices, and $R(t)$ is a symmetric positive definite matrix. Here we  use    $\mathbf U[0,T]=\mathcal{M}^2\big([0,T];\R^k\big)$  to denote the set of admissible controls. Then{\color{blue},} the  adjoint equation \eqref{adjoint} becomes
\begin{equation}
\left\{ 
\begin{array}{rl}\label{adjoint2}
dy(t)= & -\big(A^*(t)y(t)+\left(\frac{\partial}{\partial x}\tr\left[z(t)q^*\big(t,\overline x(t),\overline u(t),\overline x(t),\overline u(t)\big)\right]\right)^*+Q(t)\overline x(t)\big  )dt\\&+z(t)dM(t)+dN(t), \\ 
y(T)= & G\overline{x}(T).
\end{array}
\right. 
\end{equation}
	Now, the Hamiltonian \eqref{e:hamiltonian} is
\begin{align*}
H\big(t,x,u,y,z\big)&=\la A(t)x(t)+B(t)u(t),y(t)\ra+\tr\left[z(t) q^*\big(t,x(t),u(t),x(t),u(t)\big)\right]\\&\hspace{1.2em}+\frac{1}{2}\la Q(t)x(t),x(t)\ra+\frac{1}{2}\la R(t)u(t),u(t)\ra+\frac{1}{2}\la G(t)x(T),x(T)\ra.
\end{align*}
	Then, it  follows from the stochastic maximum principle (Theorem \ref{mp} and Remark \ref{remark:3-2}) that
\begin{align}\label{necessary}
 B^*(t)y(t)+\left(\frac{\partial}{\partial u}\tr\left[z(t) q^*\big(t,\overline{x}(t),\overline{u}(t),\overline{x}(t),\overline{u}(t)\big)\right]\right)^*+R(t)\overline{u}(t)= 0 
\end{align}
holds for a.e. $t\in[0,T]$  almost surely, which is a necessary condition for an optimal pair $(\overline x, \overline u)$. As in the classical situation, now we verify that $\overline{u}$ satisfying the necessary condition (\ref{necessary}) is actually an optimal control for the generalized stochastic LQ problems.
\begin{theorem}\label{lq}
If  $\overline{u}$ satisfies \eqref{necessary}, then $\overline{u}$ is an  optimal control for the generalized linear quadratic problem \eqref{state equation2}--\eqref{scp'}.
\end{theorem}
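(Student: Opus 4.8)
The plan is to deduce optimality of $\overline{u}$ from the sufficiency theorem of the previous subsection by checking its two structural hypotheses---convexity of $H$ in $(x,u)$ and convexity of $\Phi$ in $x$---for the linear quadratic data, after first recasting \eqref{necessary} as the required variational inequality. Since the control domain is the whole space $\mathbf U[0,T]=\mathcal{M}^2([0,T];\R^k)$, Remark \ref{remark:3-2} shows that the necessary condition produced by Theorem \ref{mp} is the equality $H_u\big(t,\overline{x}(t),\overline{u}(t),y(t),z(t)\big)=0$, which is precisely \eqref{necessary}; this equality trivially yields $H_u\big(t,\overline{x}(t),\overline{u}(t),y(t),z(t)\big)\big(u(t)-\overline{u}(t)\big)\ge 0$ for every $u\in\mathbf U[0,T]$, so the first hypothesis of the sufficiency theorem holds.

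Next I would verify the convexity requirements. The terminal cost $\Phi(x)=\tfrac12\la Gx,x\ra$ is convex because $G$ is nonnegative definite. For the Hamiltonian \eqref{e:hamiltonian} I would treat its three contributions separately: the drift term $\la y,A(t)x+B(t)u\ra$ is affine in $(x,u)$; the running-cost term $\tfrac12\la Q(t)x,x\ra+\tfrac12\la R(t)u,u\ra$ is convex in $(x,u)$ since $Q(t)$ is nonnegative definite and $R(t)$ is positive definite; and the remaining quadratic-variation term $\tr\big[z(t)q^*\big(t,\overline{x}(t),\overline{u}(t),x,u\big)\big]$ is, by the imposed structural condition \eqref{e:condition-q} applied with $A=z(t)$ and the first pair of spatial arguments frozen at $\big(\overline{x}(t),\overline{u}(t)\big)$, \emph{affine} in $(x,u)$. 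A sum of affine and convex functions is convex, hence $H$ is convex in $(x,u)$.

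With both hypotheses in hand, the sufficiency theorem applies verbatim and gives $J(u)-J(\overline{u})\ge 0$ for all $u\in\mathbf U[0,T]$, that is, $\overline{u}$ is optimal. The one genuinely nonclassical step---and the place where I expect any difficulty to concentrate---is the affineness of the cross term $\tr\big[z(t)q^*\big(t,\overline{x}(t),\overline{u}(t),x,u\big)\big]$: for a general local characteristic $q$ this term can be a nontrivial function of $(x,u)$, and then convexity of $H$ would be lost; it is exactly condition \eqref{e:condition-q} that linearizes it and thereby preserves the convexity of $H$. As an alternative to invoking the sufficiency theorem, one could reproduce its proof directly for the present data, expanding $J(u)-J(\overline{u})$, applying It\^o's formula to $\la y(t),x^u(t)-\overline{x}(t)\ra$, and using \eqref{e:condition-q} together with the orthogonality of $N$ and $M$; the bookkeeping of the $\tr[z(\cdot)]$ term via \eqref{e:condition-q} is again the only delicate point.
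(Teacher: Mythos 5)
Your proof is correct, but it takes a different route from the paper's. The paper does not invoke the sufficiency theorem of Section \ref{sec:smp}; it reproves the LQ case directly: it bounds $J(u)-J(\overline u)$ from below using the nonnegative definiteness of $Q(t)$, $R(t)$ and $G$ (inequality \eqref{22}), applies It\^o's formula to $\la y(t),x^u(t)-\overline x(t)\ra$, uses \eqref{e:condition-q} to replace the quadratic-covariation term $\tr\big[z(t)\big(q^*(t,x^u(t),u(t),\overline x(t),\overline u(t))-q^*(t,\overline x(t),\overline u(t),\overline x(t),\overline u(t))\big)\big]$ by a linear form in $(x^u-\overline x,\,u-\overline u)$ (equality \eqref{e:4.7}), and concludes with \eqref{necessary}. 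You instead check the hypotheses of the sufficiency theorem: the variational inequality holds trivially because \eqref{necessary} says $H_u=0$ (Remark \ref{remark:3-2}), $\Phi$ is convex since $G\ge 0$, and $H$ is jointly convex because the drift term is affine, the running cost is convex ($Q\ge 0$, $R>0$), and --- the one nonclassical point, which you correctly isolate --- \eqref{e:condition-q} makes $(x,u)\mapsto\tr\big[z(t)q^*\big(t,\overline x(t),\overline u(t),x,u\big)\big]$ affine. The mathematical content is the same: the paper's computation is exactly the proof of the sufficiency theorem specialized to linear--quadratic data, and \eqref{e:condition-q} plays the identical role in both (there it linearizes the It\^o correction term, here it yields convexity of $H$). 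Your route is shorter and makes Theorem \ref{lq} transparently a corollary of the sufficiency theorem; the paper's route is self-contained. Two small caveats for your version: you must read the Hamiltonian as in \eqref{e:hamiltonian}, with the first pair of spatial arguments frozen at $(\overline x(t),\overline u(t))$ --- the Hamiltonian displayed in Section \ref{sec:LQ}, in which all four spatial arguments of $q^*$ vary, would not be affine, and your reading is the one consistent with \eqref{necessary} --- and invoking the sufficiency theorem presupposes (H1)--(H2) for the LQ data (bounded coefficient matrices), which is the same standing assumption the paper uses implicitly.
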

\begin{proof}
	To prove the optimality of $\overline u$, it suffices to show  $J(u)-J(\overline u)\geq 0$ for all $u\in\mathbf U[0,T]$. 	From the nonnegative definiteness of $Q(t), R(t)$ and $G$, we have
	\begin{align}\label{22}
	\notag&J(u)-J(\overline{u})\\\notag=&\frac{1}{2}\E\Bigg\{\int_0^T\Big[\la Q (t)x^u(t),x^u(t)\ra-\la Q(t)\overline{x}(t),\overline{x}(t)\ra+\la R(t)u(t),u(t)\ra-\la R(t)\overline{u}(t),\overline{u}(t)\ra\Big] dt\\\notag&\hspace{5em}+\la Gx^u(T),x^u(T)\ra-\la G\overline{x}(T),\overline{x}(T)\ra\Bigg\}\\\geq& \E\Bigg\{\int_0^T\Big[\la Q(t)\overline{x}(t),x^u(t)-\overline{x}(t)\ra+\la R(t)\overline{u}(t),u(t)-\overline{u}(t)\ra\Big] dt+\la G\overline{x}(T),x^u(T)-\overline{x}(T)\ra\Bigg\}.
	\end{align}
		Then, applying It\^o's formula to $\la x^u(t)-\overline{x}(t),y(t)\ra$, we have
	\begin{align}
	&\E\la G\overline x(T),x^u(T)-\overline x(T)\ra\notag\\=& \E\int_0^T\la-A^*(t)y(t)-\left(\frac{\partial}{\partial x}\tr\left[z(t) q^*\big(t,\overline x(t),\overline u(t),\overline x(t),\overline u(t)\big)\right]\right)^*-Q(t)\overline x(t),x^u(t)-\overline x(t)\ra dt\notag\\&\hspace{1em}+\E\int_0^T\la y(t),A(t)\big(x^u(t)-\overline x(t)\big)+B(t)\big(u(t)-\overline u(t)\big)\ra dt\notag\\&\hspace{1em}+\E\int_0^Td \la \int_0^\cdot z(s)M(ds)+\int_0^\cdot dN(s),\int_0^\cdot M\big(ds,x^u(s),u(s)\big)-\int_0^\cdot M\big(ds,\overline x(s),\overline u(s)\big)\ra_t\notag\\
	=&\E\int_0^T\Bigg[\la -Q(t)\overline{x}(t),x^u(t)-\overline{x}(t)\ra+\la B^*(t)y(t),u(t)-\overline{u}(t)\ra \notag\\&\hspace{5em}+\la -\left(\frac{\partial}{\partial x}\tr\left[z(t) q^*\big(t,\overline x(t),\overline u(t),\overline x(t),\overline u(t)\big)\right]\right)^*,x^u(t)-\overline x(t)\ra\Bigg] dt\notag \\&\hspace{5em}+\E\int_0^T\tr\Big[z(t)\big(q^*(t,x^u(t),u(t),\overline x(t),\overline u(t))-q^*(t,\overline x(t),\overline u(t),\overline x(t),\overline u(t))\big)\Big]dt
	\notag\\
	=&\E\int_0^T\Bigg[\la -Q(t)\overline{x}(t),x^u(t)-\overline{x}(t)\ra\notag\\
	&\qquad \qquad +\la B^*(t)y(t)+\left(\frac{\partial}{\partial u}\tr\left[z(t)q\big(t,\overline x(t),\overline u(t),\overline x(t),\overline u(t)\big)\right]\right)^*,u(t)-\overline{u}(t)\ra\Bigg] dt.\label{e:4.7}
	\end{align}
	where the last equality follows from \eqref{e:condition-q}. Then the desired inequality follows from (\ref{necessary}), (\ref{22}) and \eqref{e:4.7}:
	\begin{align*}
	&J(u )-J(\overline{u})\\ \geq &\E\int_0^T \la R(t)\overline{u}(t)+B^*(x)y(t)+\left(\frac{\partial}{\partial u}\tr\left[z(t)q\big(t,\overline x(t),\overline u(t),\overline x(t),\overline u(t)\big)\right]\right)^*,u(t)-\overline{u}(t)\ra dt\\=&0.
	\end{align*}
	This concludes the proof.
\end{proof}
\begin{remark}
It can be easily checked that if $q$ is linear with respect to $x$, $ y$, $u$ and $v$, then  condition \eqref{e:condition-q} is fulfilled, 	and hence  the classical LQ problem is recovered.  	More precisely, 	we consider the linear form of \eqref{classical-2}:
	\begin{equation}\label{lcsde}
	dx^u(t)=x^u_0+\big[A(t)x^u(t)+B(t)u(t)\big]dt+\sum\limits_{j=1}^m\big[C_j(t)x^u(t)+D_j(t)u(t)\big]dW^j_t,
	\end{equation}
where $A$, $B$, $C_j$, $D_j$ are deterministic matrix-valued functions of suitable dimensions.	Then, the local characteristic $q$ of \[M(t,x,u) = \sum_{j=1}^m \left[\int_0^t C_j(s) x dW^j_s+ \int_0^t D_j(s) udW^j_s\right]\] is given by 
\begin{align*}
	q(t,x,u,y,v)
	=\sum\limits_{j=1}^m\Big[C_j(t)xy^*C_j^*(t)+C_j(t)xv^*D^*_j(t)+D_j(t)uy^*C^*_j(t)+D_j(t)uv^*D^*_j(t)\Big],
\end{align*}
which satisfies \eqref{e:condition-q} with equality.\\

	 In this situation, condition \eqref{necessary} can be written as   \[B^*(t)y(t)+\sum\limits_{j=1}^m\tr\big[z(t)D^*_j(t)\big(C_j(t)\overline x(t)+D_j(t)\overline u(t)\big)\big]+R(t)\overline u(t)=0.\]
	This is consistent with the variational inequality \eqref{va-c} (which is an equality when $U=\R^k$ by Remark \ref{remark:3-2}) in the classical setting. 
\end{remark}

\medskip

{\bf Acknowledgement.} The authors would like to thank Prof. Mingshang Hu for 	his helpful discussions.  	The authors are also 	grateful to the two anonymous referees for their valuable comments.  J. Song is partially supported by Shandong University grant 11140089963041 and 	the National Natural Science Foundation of China grant 12071256.


\bibliographystyle{plain}
\bibliography{Reference}
\end{document}